\documentclass{amsart}
\usepackage{latexsym}
\usepackage{amsmath,amsfonts,epsfig, graphics, graphicx, amsthm, amssymb}
\input{xy}
\xyoption{all}

\newtheorem{theorem}{Theorem}[section]
\newtheorem{lemma}[theorem]{Lemma}

\newtheorem*{GRH}{Generalized Riemann Hypothesis}

\pdfoutput=1

\begin{document}
\title{Primes in Arithmetic Progressions to Large Moduli and Siegel Zeroes}
\author[T. Wright]{Thomas Wright}
\address{Wofford College\\429 N. Church St.\\Spartanburg, SC 29302\\USA}
\maketitle

\begin{abstract}
Let $\chi$ be a Dirichlet character mod $D$ with $L(s,\chi)$ its associated $L$-function, and let $\psi(x,q,a)$ be Chebyshev's prime-counting function for primes congruent to $a$ modulo $q$.  We show that under the assumption of an exceptional character $\chi$ with $L(1,\chi)=o\left((\log D)^{-5}\right)$, for any $q<x^{\frac 23-\varepsilon}$, the asymptotic
$$\psi(x,q,a)=\frac{\psi(x)}{\phi(q)}\left(1-\chi\left(\frac{aD}{(D,q)}\right)+o(1)\right)$$
holds for almost all $a$ with $(a,q)=1$.  We also find that for any fixed $a$, the above holds for almost all $q<x^{\frac 23-\varepsilon}$ with $(a,q)=1$.
Previous prime equidistribution results under the assumption of Siegel zeroes (by Friedlander-Iwaniec and the current author) have found that the above asymptotic holds either for all $a$ and $q$ or on average over a range of $q$ (i.e. for the Elliott-Halberstam conjecture), but only under the assumption that $q<x^{\theta}$ where $\theta=\frac{30}{59}$ or $\frac{16}{31}$, respectively.
\end{abstract}

\section{Introduction}

We recall first the definition of the Dirichlet $L$-function:
$$L(s,\chi)=\sum_{n=1}^\infty \frac{\chi(n)}{n^s}.$$
Here, $\chi$ is a Dirichlet character modulo an integer $q>2$.  We will assume that $\chi$ is non-principal, and hence the above sum is convergent for $Re(s)>0$.

The study of primes in arithmetic progressions is closely related to the study of when $L(s,\chi)$ equals zero.  Dirichlet's work found a zero-free region around $s=1$, while larger zero-free regions would allow for better error terms for this theorem.  Indeed, one of the most famous conjectures in mathematics is the belief that all of these zeroes are, in fact, on the half-line:

\begin{GRH}
For a Dirichlet character $\chi$, let $L(s,\chi)=0$ for $s=\sigma+it$ with $\sigma>0$.  Then $\sigma=\frac 12$.
\end{GRH}

Of course, we are nowhere close to proving this.  In the case where the zero is real, the best effective and ineffective bounds come from Landau \cite{La} and Siegel \cite{Si}, respectively:
\begin{theorem}[Landau, 1918]
There exists an effectively computable positive constant $C$ such that for any $q$ and any character $\chi$ mod $q$, if $L(s,\chi)=0$ and $s$ is real, then
$$s<1-\frac{C}{q^\frac 12 \log^2 q}.$$
\end{theorem}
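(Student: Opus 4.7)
The plan is to reduce to the case of a real primitive character $\chi$ modulo $q$, and then combine an effective lower bound on $L(1,\chi)$ with a pointwise upper bound on $|L'(s,\chi)|$ near $s=1$, via the mean value theorem. For the principal character, $L(s,\chi_0)$ is $\zeta(s)$ times finitely many Euler factors, and $\zeta$ has no zero in $(0,1)$. For a complex character, the classical de la Vall\'ee Poussin zero-free region already yields $s < 1 - c/\log q$, which is stronger than claimed. If $\chi$ is real but imprimitive, induced from a primitive $\chi^*$ modulo $q^*\mid q$, then any real zero of $L(s,\chi)$ in $(0,1)$ must be a zero of $L(s,\chi^*)$, since each extra Euler factor $1-\chi^*(p)p^{-s}$ is nonvanishing for real $s \in (0,1)$. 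We thus reduce to $\chi$ real and primitive.

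The first main step is the effective Dirichlet bound $L(1,\chi) \gg q^{-1/2}$. By the class number formula, $L(1,\chi)$ equals a positive constant times $h(d)/\sqrt{|d|}$ in the imaginary quadratic case, and $h(d)\log\varepsilon/\sqrt{d}$ in the real quadratic case, where $d$ is the discriminant attached to $\chi$. Since $h(d)\ge 1$ and the fundamental unit satisfies $\log\varepsilon \ge \log((1+\sqrt{5})/2) > 0$, the bound follows with an effective constant.

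The second main step is the bound $|L'(s,\chi)| \ll (\log q)^2$ uniformly on $[1-c_0/\log q,\,1]$; a zero outside this interval automatically satisfies the claim. Writing $L'(s,\chi) = -\sum_n \chi(n)(\log n) n^{-s}$ and applying partial summation with the P\'olya--Vinogradov estimate $|\sum_{n\le x}\chi(n)|\ll \sqrt{q}\log q$ yields the bound. Combining this with the mean value theorem on $[\beta,1]$ produces some $\xi\in(\beta,1)$ with
\[
\frac{c_1}{\sqrt{q}} \le L(1,\chi) = (1-\beta)L'(\xi,\chi) \le (1-\beta)\cdot c_2 (\log q)^2,
\]
which rearranges to the claimed inequality.

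The main obstacle is the effective lower bound for $L(1,\chi)$. The imaginary quadratic case is immediate from $h(d)\ge 1$, but the real quadratic case additionally requires controlling the fundamental unit from below, the only genuinely arithmetic input. Everything else is routine partial summation and the one-variable mean value theorem. This obstacle is also what caps the method: the bound $L(1,\chi)\gg q^{-1/2}$ cannot be improved by these elementary means, and pushing it to $L(1,\chi) \gg_\varepsilon q^{-\varepsilon}$ is precisely the content of Siegel's (ineffective) improvement.
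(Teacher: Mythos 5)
The paper does not prove this theorem; it is stated as historical background and cited to Landau \cite{La}, so there is no in-paper argument to compare against. Your proof is correct and is the standard one: after reducing to real primitive $\chi$, the two key ingredients are the effective Dirichlet--Landau lower bound $L(1,\chi) \gg q^{-1/2}$ (from the class number formula, with $h\ge 1$ and the fundamental unit bounded below by the golden ratio) and the upper bound $|L'(\sigma,\chi)| \ll (\log q)^2$ for real $\sigma$ within $O(1/\log q)$ of $1$ (from P\'olya--Vinogradov and partial summation). The mean value theorem on $[\beta,1]$, together with $L(\beta,\chi)=0$ and $L(1,\chi)>0$, forces $L'(\xi,\chi)>0$ and gives $L(1,\chi)=(1-\beta)L'(\xi,\chi)\le (1-\beta)\cdot c_2(\log q)^2$, whence $1-\beta \gg q^{-1/2}(\log q)^{-2}$ as claimed. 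Your reduction steps (principal character handled via $\zeta$, complex characters via the de la Vall\'ee Poussin zero-free region, and nonvanishing of the extra Euler factors $1-\chi^*(p)p^{-s}$ on real $s\in(0,1)$ for imprimitive real characters) are all sound, and your identification of the real-quadratic fundamental unit as the one genuinely arithmetic input is exactly right.
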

\begin{theorem}[Siegel, 1935]
For any $\varepsilon>0$ there exists a positive constant $C(\varepsilon)$ such that if $L(s,\chi)=0$ and $s$ is real then
$$s<1-C(\varepsilon)q^{-\varepsilon}.$$
\end{theorem}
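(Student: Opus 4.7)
The plan is to prove the equivalent lower bound $L(1,\chi) \gg_\varepsilon q^{-\varepsilon}$ for all real non-principal primitive characters $\chi$ modulo $q$; a standard application of the class number formula (or, equivalently, an integral representation of $L(s,\chi)$ near $s=1$) then converts this lower bound into an upper bound on any real zero $\beta$ of $L(s,\chi)$, yielding the stated form. Since Landau's theorem already ensures the effective bound $1-\beta \gg q^{-1/2}\log^{-2} q$, the content here is to improve the exponent from $1/2$ down to an arbitrarily small $\varepsilon$.

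The core construction is the Siegel--Landau trick. Given two distinct real primitive non-principal characters $\chi_1 \pmod{q_1}$ and $\chi_2 \pmod{q_2}$, form
\begin{equation*}
F(s) := \zeta(s)\, L(s,\chi_1)\, L(s,\chi_2)\, L(s,\chi_1\chi_2).
\end{equation*}
A direct Euler-product computation shows $F(s) = \sum_{n \geq 1} a_n n^{-s}$ with $a_n \geq 0$ and $a_1 = 1$; the function is meromorphic with a single simple pole at $s=1$ of residue
\begin{equation*}
\lambda := L(1,\chi_1)\, L(1,\chi_2)\, L(1,\chi_1\chi_2).
\end{equation*}
Expanding $F(s) - \lambda/(s-1)$ as a Taylor series around $s = 2$ and using the non-negativity of the $a_n$ (Landau's positivity trick), one derives, for $\sigma \in (\tfrac12,1)$, a two-sided comparison in which $F(\sigma)$ is squeezed between a positive ``main term'' coming from $a_1 = 1$ and an upper bound of the shape
\begin{equation*}
F(\sigma) \;\leq\; \frac{C \lambda}{1-\sigma}\, (q_1 q_2)^{A(1-\sigma)},
\end{equation*}
with the upper estimate supplied by standard convexity bounds for $L$-functions along vertical lines in the critical strip.

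Now comes the dichotomy on which the theorem turns. Fix $\varepsilon > 0$. If some real primitive $\chi_1$ has a real zero $\beta_1$ of $L(s,\chi_1)$ exceptionally close to $1$ (with closeness measured in terms of $\varepsilon$), then substituting $\sigma = \beta_1$ and using $F(\beta_1) = 0$ in the inequalities above allows one to solve for $L(1,\chi_2)$, which appears as a factor of $\lambda$; after rearrangement one obtains $L(1,\chi_2) \gg_{\chi_1,\varepsilon} q_2^{-\varepsilon}$ uniformly in $\chi_2 \neq \chi_1$. If on the other hand no such $\chi_1$ exists for the relevant scale, then the desired bound holds already with an absolute constant by elementary estimates. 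One of the two alternatives must hold for each fixed $\varepsilon$, and in either case $L(1,\chi) \gg_\varepsilon q^{-\varepsilon}$ follows.

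The principal obstacle --- and the reason $C(\varepsilon)$ is necessarily \emph{ineffective} --- lies in the final step. The argument guarantees the existence of a suitable reference character $\chi_1$ for each $\varepsilon$, but provides no means of identifying its modulus $q_1$; since the constant emerging from the dichotomy depends on $\chi_1$ through $q_1$, and $q_1$ depends on $\varepsilon$ in a way the proof does not control, no computable $C(\varepsilon)$ can be extracted. The main technical care is in balancing the exponent $A(1-\sigma)$ of the convexity bound against the assumed smallness of $1-\beta_1$, so that the two inequalities combine in the favorable direction for every $\varepsilon$; this is where the flexibility afforded by the dichotomy (as opposed to a direct construction) becomes essential.
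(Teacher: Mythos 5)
The paper does not prove this statement. Siegel's theorem is cited in the introduction as classical background (with reference to Siegel's 1935 paper), and it plays no direct role in the arguments that follow: the paper's actual results are proved \emph{assuming} the existence of a Siegel zero, i.e.\ assuming a situation where Siegel's bound is close to sharp, not by re-deriving the bound. So there is no ``paper's own proof'' to compare against.

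That said, your sketch follows the standard Landau--Siegel argument and the main ingredients (the auxiliary Dirichlet series $F(s)=\zeta L(\cdot,\chi_1)L(\cdot,\chi_2)L(\cdot,\chi_1\chi_2)$ with nonnegative coefficients, Landau's positivity trick to get a lower bound for $F(\sigma)$ near $s=1$, the dichotomy, and the source of ineffectivity) are all correct. Two points need repair, though. First, the ``two-sided comparison'' phrasing is off: what the positivity trick actually gives is a single \emph{lower} bound of the form $F(\sigma)\geq \tfrac12 - C\lambda(1-\sigma)^{-1}(q_1q_2)^{A(1-\sigma)}$; there is no upper bound on $F(\sigma)$ being used, and it is this lower bound combined with $F(\beta_1)=0$ that is rearranged into the lower bound for $\lambda$. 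Second, and more seriously, the second branch of the dichotomy is not ``elementary.'' If no real primitive $\chi_1$ has a zero in $[1-\delta,1)$, you do not get $L(1,\chi)\gg_\varepsilon q^{-\varepsilon}$ for free; rather, you run the \emph{same} inequality with $\sigma=1-\delta$ and observe that $F(1-\delta)<0$, because $\zeta(\sigma)<0$ for $0<\sigma<1$ while each $L(\sigma,\cdot)>0$ on $[1-\delta,1]$ (no real zeros there, $L(1,\cdot)>0$ by Dirichlet, continuity). That sign information is what replaces $F(\beta_1)=0$, and without it your second case has no content. In this branch the resulting constant is effective; the ineffectivity you correctly identify enters only in the first branch, where $\chi_1$ and $\beta_1$ cannot be pinned down.
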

However, most zeroes are far closer to the half-line than these bounds indicate.  In fact, it is known (see \cite{Gr}, \cite{La}, \cite{Ti}) that for any $q$, every zero of $L(s,\chi)$ except at most one will obey a much smaller bound:
\begin{theorem}
There is an effectively computable positive constant $C$ such that $$\prod_{\chi\mbox{ }mod\mbox{ }q}L(s,\chi)=0$$
has at most one solution on the region $$\sigma\geq 1-\frac{C}{\log q(2+|t|)}.$$
If such a zero exists, $s$ must be real, and the character for which $L(s,\chi)=0$ must be a non-principal real character.
\end{theorem}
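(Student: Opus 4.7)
The plan is the classical two-step Landau--Page argument: first derive a standard zero-free region that permits only real zeros of real characters, then use a product-of-$L$-functions positivity argument to show that at most one such zero can exist in the allowed window.

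For the zero-free region, I would begin from the elementary inequality $3 + 4\cos\theta + \cos 2\theta \geq 0$, applied to the Dirichlet coefficients of $\zeta(s)^3 L(s,\chi)^4 L(s,\chi^2)$. This yields for $\sigma > 1$ the positivity
\[
-3\,\frac{\zeta'}{\zeta}(\sigma) \;-\; 4\,\Re\,\frac{L'}{L}(\sigma+i\gamma,\chi) \;-\; \Re\,\frac{L'}{L}(\sigma+2i\gamma,\chi^2) \;\geq\; 0.
\]
If $\beta+i\gamma$ is a zero of $L(s,\chi)$ with $\beta$ close to $1$, Hadamard's factorization gives $-\Re(L'/L)(\sigma+i\gamma,\chi) \leq -\frac{1}{\sigma-\beta} + O(\log q(|\gamma|+2))$, while $-\zeta'/\zeta(\sigma) = \frac{1}{\sigma-1} + O(1)$ and the final term is $O(\log q(|\gamma|+2))$ \emph{except} when $L(s,\chi^2)$ inherits a pole from $\zeta(s)$, i.e.\ when $\chi^2$ is principal and $|\gamma|$ is very small. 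Taking $\sigma = 1 + c/\log q(|\gamma|+2)$ and rearranging yields the claimed zero-free region outside the exceptional case. In the exceptional case $\chi$ is real, and applying the same inequality to the conjugate pair $\{\beta+i\gamma,\beta-i\gamma\}$ (should $\gamma \neq 0$) doubles the $-1/(\sigma-\beta)$ contribution and still produces a contradiction away from $\gamma = 0$; hence the exceptional zero must itself be real.

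For uniqueness, suppose two distinct real non-principal characters $\chi_1 \neq \chi_2$ modulo $q$ have real zeros $\beta_1, \beta_2$ in the region. Consider
\[
F(s) \;:=\; \zeta(s)\,L(s,\chi_1)\,L(s,\chi_2)\,L(s,\chi_1\chi_2),
\]
which, up to Euler factors at primes dividing $q$, is the Dedekind zeta function of the biquadratic field attached to $\chi_1,\chi_2$, and therefore has nonnegative Dirichlet coefficients. Consequently $-F'/F(\sigma) \geq 0$ for $\sigma > 1$, while the Hadamard expansion gives
\[
-\,\frac{F'}{F}(\sigma) \;\leq\; \frac{1}{\sigma-1} \;-\; \frac{1}{\sigma-\beta_1} \;-\; \frac{1}{\sigma-\beta_2} \;+\; O(\log q).
\]
Setting $\sigma = 1 + \eta/\log q$ for sufficiently small $\eta > 0$ forces at least one $\beta_i$ to lie outside the window, contradicting the hypothesis. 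A parallel argument with $\zeta(s)L(s,\chi)^2 L(s,\chi^2)$ rules out two real zeros of a single real character.

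The main obstacle is the careful bookkeeping of the partial-fraction expansion coming from the Hadamard product: one must verify that the aggregate contribution of all \emph{other} zeros to $\Re(L'/L)$ is bounded by $O(\log q(|\gamma|+2))$ with a uniform absolute constant, so that the zero of interest truly dominates. Two minor technicalities are the reduction of $\chi$ modulo $q$ to its primitive inducing character (losing only $O(\log q)$ from the missing Euler factors) and the observation that in the uniqueness step $\chi_1\chi_2$ is automatically non-principal, since $\chi_1\chi_2 = 1$ with $\chi_2$ real would force $\chi_1 = \chi_2$, contradicting distinctness.
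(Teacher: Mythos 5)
The paper does not prove this theorem; it is quoted as a classical result with pointers to Gronwall, Landau, and Titchmarsh, so there is no in-paper argument to compare against. Your reconstruction is the standard Landau--Page proof, and its skeleton is sound: the $3$--$4$--$1$ positivity applied to $\zeta^3 L(\chi)^4 L(\chi^2)$ excludes zeros near $\sigma=1$ whenever $\chi^2$ is non-principal; the exceptional case forces $\chi$ real, and complex zeros of a real $\chi$ are handled by pairing $\rho$ with $\bar\rho$; and uniqueness follows from the nonnegativity of the Dirichlet coefficients of $\zeta\,L(\chi_1)L(\chi_2)L(\chi_1\chi_2)$, together with the observation that $\chi_1\chi_2$ is non-principal for distinct real $\chi_1,\chi_2$.

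Two spots need tightening. First, the phrase that including $\bar\rho=\beta-i\gamma$ ``doubles the $-1/(\sigma-\beta)$ contribution'' is only accurate when $|\gamma|\ll\sigma-\beta$; the conjugate zero actually contributes $\dfrac{\sigma-\beta}{(\sigma-\beta)^2+4\gamma^2}$, which degrades as $|\gamma|$ grows. The standard fix is a case split: for $|\gamma|\gg 1/\log q$ the pole term $\Re\,\dfrac{1}{\sigma-1+2i\gamma}$ is already small and the $3$--$4$--$1$ inequality closes by itself, while for $|\gamma|\ll 1/\log q$ the conjugate-pair contribution is close to a genuine doubling (or one may instead run the positivity argument on $\zeta(s)L(s,\chi)$ alone, which is cleaner). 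Second, in the same-character uniqueness step the auxiliary function $\zeta L(\chi)^2 L(\chi^2)$ has $\chi^2=\chi_0$, so it acquires a \emph{double} pole at $s=1$; the bookkeeping still works because each $\beta_i$ is then a double zero of the product, but it is simpler to use $\zeta(s)L(s,\chi)$ directly, which has a simple pole and already sees both real zeros (or a double real zero) of $L(s,\chi)$. With those adjustments your argument matches the classical proof the paper is citing, and correctly incorporates the reduction to primitive characters and the check that $\chi_1\chi_2$ is non-principal.
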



A zero of this type, if it is to exist, is called a \textit{Siegel zero} or an \textit{exceptional zero}, and the associated character is called an exceptional character. We note that the definition given here (or, indeed, in the literature in general) for a Siegel zero is not particularly rigorous, since this definition depends on the choice of the constant $C$.

\section{Siegel Zeroes}\label{Siegelsection}

Interestingly, the existence of Siegel zeroes would lead to some surprisingly nice properties among the primes.  Most notably, the existence of Siegel zeroes would allow us to prove (among other things) the twin prime conjecture \cite{HB83}, small gaps between general $m$-tuples of primes \cite{WrS}, the existence of large intervals where the Goldbach conjecture is true \cite{MaMe}, a hybrid Chowla and Hardy-Littlewood conjecture \cite{TT}, and results about primes in arithmetic progressions that would allow the modulus $q$ to be greater than $\sqrt x$ \cite{FI03}, \cite{WrSiAP}.  It is this last result that is of interest in the present paper.

In the definitions below, we will assume that $(a,q)=1$.  We recall that Chebyshev's functions are given by
\begin{gather*}
\psi(x)=\sum_{n\leq x}\Lambda(n),\\
\psi(x,q,a)=\sum_{\substack{n\leq x\\n\equiv a\pmod q}}\Lambda(n),
\end{gather*}
where $\Lambda$ is the von Mangoldt function given by
$$\Lambda(n)=\begin{cases} \log p & \mbox{if }n=p^k\mbox{ for prime }p,\\ 0 & \mbox{otherwise.}\end{cases}$$
In 2003, Friedlander and Iwaniec \cite{FI03} proved the following:

\begin{theorem}[Friedlander-Iwaniec, 2003] \label{FIFI}
Let $\chi$ be a real character mod $D$.  Let $x>D^r$ with $r=554,401$, let $q=x^\theta$ with $\theta<\frac{233}{462}$, and let $(a,q)=1$.  Then
\begin{gather}\label{FIthm}\psi(x,q,a)=\frac{\psi(x)}{\phi(q)}\left(1-\chi\left(\frac{aD}{(D,q)}\right)+O\left(L(1,\chi)(\log x)^{r^r}\right)\right).
\end{gather}
\end{theorem}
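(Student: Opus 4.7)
The strategy rests on the standard Siegel-zero philosophy: if $L(1,\chi) = o(1)$, the explicit formula for $\psi(x,\chi')$ (where $\chi'$ ranges over characters mod $q$) acquires a significant contribution from the real zero $\beta$ of $L(s,\chi)$, and this contribution is precisely what produces the correction $-\chi(aD/(D,q))$ in the main term. Concretely, $\chi$ (of conductor $D$) induces a character $\chi^\ast$ mod $q$ via the common factor $(D,q)$, and one has $\psi(x,\chi^\ast) \approx -x^\beta/\beta \approx -\psi(x)$ under the exceptional zero assumption. Substituting into the orthogonality expansion $\psi(x,q,a) = \frac{1}{\phi(q)}\sum_{\chi'}\overline{\chi'(a)}\psi(x,\chi')$ isolates the claimed shape of the main term, with all other $\chi'$-contributions absorbed into the error.

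Making this rigorous for $q$ in the range $q \sim x^\theta$ with $\theta > 1/2$ is the substantive content of the theorem, since standard methods (Siegel-Walfisz, Bombieri-Vinogradov) do not reach that far. The plan is to analyze $\psi(x,q,a)$ directly via combinatorial decomposition: apply a Heath-Brown (or Vaughan) identity to split $\Lambda(n)$ into Type I convolutions $\sum a_d \mathbf{1}_{dm=n}$ with a short coefficient variable and Type II bilinear forms $\sum a_d b_m \mathbf{1}_{dm=n}$ with both variables of moderate size. Type I sums reduce to counting integers in arithmetic progressions by elementary means, with the $\chi$-correction emerging from the estimate $\sum_{d\leq y}(\mu(d)+\chi(d))/d \ll L(1,\chi)(\log y)^A$, itself a consequence of the factorization $\zeta(s)L(s,\chi) = \sum (1\ast\chi)(n)/n^s$ together with the near-vanishing of $L(s,\chi)$ at $s=1$.

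The hardest step is the Type II analysis for $q > x^{1/2}$. The plan is to deploy Linnik's dispersion method: fix the congruence condition, apply Cauchy-Schwarz to smooth one variable, and reduce to estimating sums of the form $\sum_{n_1,n_2}\beta_{n_1}\overline{\beta_{n_2}} \#\{m \in I : mn_1 \equiv mn_2 \pmod q\}$. Without exceptional input one hits a wall at $q = x^{1/2}$, but the Siegel zero rescues the argument: inserting the substitution $\mu \leftrightarrow -\chi$ into the bilinear coefficients converts these into character sums modulo $q$ (or $qD$), which are then estimated via the Weil bound and Deshouillers-Iwaniec-style analysis of incomplete Kloosterman sums. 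Balancing the Cauchy-Schwarz losses against these Weil gains is what produces the exponent $\theta < 233/462$, and the doubly-exponential $(\log x)^{r^r}$ absorbs the log-losses from the nested dyadic decompositions needed to separate the variable ranges. The main obstacle is tracking the interaction of $q$, $D$, and $(D,q)$ through every application of Cauchy-Schwarz — it is precisely this interaction that dictates the precise form $\chi(aD/(D,q))$ of the character factor in the main term, and that forces the hypothesis $x > D^r$ to prevent the exceptional modulus from dominating the error.
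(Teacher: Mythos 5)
The paper does not prove this theorem; it is cited from Friedlander and Iwaniec \cite{FI03}, and Section \ref{Deflambda} summarizes their argument. Your proposal diverges from that argument in a way that leaves a genuine gap. The engine of the actual proof is the convolution identity $\Lambda = \lambda' \ast \nu$ with $\lambda' = \chi \ast \log$ and $\nu = \mu \ast \mu\chi$. When $\chi$ is exceptional, $\nu$ is essentially supported on very small integers and satisfies $|\nu| \leq \lambda = 1 \ast \chi$, with $\sum_d \lambda(d)/d \ll L(1,\chi)\log x$. Splitting the convolution at $d \leq D^2$ versus $d > D^2$, the small-$d$ range produces the main term $\bigl(1 - \chi(aD/(D,q))\bigr)/\phi(q)$ from an analysis of $S'(x,q,a)$ via contour integration, the functional equation, and Gauss-sum estimates (this is Lemma \ref{lambda'ap}, the analogue for $\lambda'$ of Lemma \ref{lambdaap}). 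The large-$d$ range is a quaternary divisor sum dominated by $\lambda' \ast \lambda$, which Landreau's device \cite{Land} reduces to a twisted ternary divisor function $\lambda \ast 1$ in progressions; the equidistribution of ternary divisor sums to moduli slightly beyond $x^{1/2}$ is what yields $233/462$, and iterating Landreau produces the $(\log x)^{r^r}$.

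Your plan — Heath-Brown/Vaughan decomposition into Type I and Type II sums, then Linnik dispersion and Deshouillers--Iwaniec Kloosterman bounds — captures the philosophy that $\mu$ mimics $-\chi$, and the analytic tools you mention do underlie the ternary-divisor equidistribution results that \cite{FI03} invoke. But the pivotal step, ``inserting the substitution $\mu \leftrightarrow -\chi$ into the bilinear coefficients,'' is not a rigorous operation. After a Heath-Brown decomposition the coefficients are generic convolutions of $\mu$ and $\log$; there is no clean way to pull out the exact main term $\chi(aD/(D,q))$ from them, and the cost of replacing $\mu$ by $-\chi$ cannot be bounded by a single global factor of $L(1,\chi)$ without some structural device. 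The identity $\Lambda = \lambda' \ast \nu$ is precisely that device: $\nu$ concentrates the $L(1,\chi)$-smallness and has short effective support, while $\lambda'$ carries the arithmetic-progression structure and yields the main term via Gauss sums. Without it, neither the exact shape of the main term, nor the $L(1,\chi)$-error with its $1/\phi(q)$ normalization, nor the specific exponent $\theta < 233/462$ (which is dictated by the quaternary-to-ternary reduction, not by direct dispersion on $\Lambda$) follows from your outline as sketched.
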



While this theorem gives us an understanding of the distribution of primes in arithmetic progressions beyond the so-called $x^\frac 12$ barrier, it requires a rather extreme Siegel zero.  Indeed, this theorem is only non-trivial if
$$L(1,\chi)=o\left(\left(\log D\right)^{-{554,401}^{554,401}}\right).$$
In a recent work \cite{WrSiAP}, the current author relaxed the requirements on $\theta$ and $L(1,\chi)$ to $\theta<\frac{30}{59}-\varepsilon$ and
$$L(1,\chi)=o\left(\left(\log D\right)^{-7}\right),$$
in addition to proving that (\ref{FIthm}) holds for almost all $q\sim x^\theta$ with $\theta<\frac{16}{31}-\varepsilon$.

It was noted in \cite{FI03} that if the methods of that paper have an obvious technical obstruction at $\theta=\frac 23$, which we will discuss below.  In light of this, it seems that $\theta=\frac 23-\varepsilon$ might be the best possible result using these methods.  In this paper, we show that in several contexts, we can indeed reach this bound for $\theta$.

Since we generally only require $L(1,\chi)$ to be smaller than log to a power, we define the following.  For some large power of $A$ (say, $A=10,000$), write
$$\mathcal L(\chi)=\max\{L(1,\chi),\log^{-A}x\}.$$
Throughout this paper, we will generally assume that any $\varepsilon<\frac{1}{500}$ and $\alpha<\frac{1}{500}$, as this will be sufficiently small for our purposes.

We will prove two results.  The first is a fairly sharp version of the Brun-Titchmarsh theorem for $q<x^{\frac 23-\varepsilon}$, which can be restated as a lower bound on the number of primes congruent to $a$ mod $q$ for every $q<x^{\frac 23-\varepsilon}$ and almost all $a$ with $(a,q)=1$:

\begin{theorem}\label{Main1}
Let $x$ and $D$ be such that $\log D=(\log x)^{\kappa}$ for some $\kappa<1$, let $\sqrt x<q<D^{-1}x^{\frac 23-\varepsilon}$ for any $\varepsilon>0$, and let $(a,q)=1$.
Then
$$\psi(x,q,a)\leq \frac{1-\chi_D\left(\frac{aD}{(D,q)}\right)+O\left(\mathcal L(\chi)\log^5 x\right)}{\phi(q)}\psi(x).$$
Moreover, for a given $q$ as above, for any function $h=h(x)<1$, the equation
$$\psi(x,q,a)\geq \frac{1-h-\chi_D\left(\frac{aD}{(D,q)}\right)}{\phi(q)}\psi(x),$$
holds for all but $$O\left(\frac{\phi(q)}{h}\left(\mathcal L(\chi)\log^5 x\right)\right)$$values of $a$ with $(a,q)=1$.
\end{theorem}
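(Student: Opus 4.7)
The plan is to establish the upper bound first and then derive the ``almost-all'' lower bound from it by an averaging argument. Essentially all of the substantive work lies in the upper bound.

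For the upper bound, I would apply an upper-bound sieve (Selberg's sieve, or an equivalent combinatorial sieve) to the set
$$\mathcal A_a = \{ n \leq x : n \equiv a \pmod q \}, \qquad |\mathcal A_a| = x/q + O(1).$$
Unconditionally this would yield the classical Brun-Titchmarsh constant $2/\phi(q)$. Under the hypothesis $L(1,\chi_D) = o((\log D)^{-5})$, the twisted M\"obius $g := \mu \ast \chi_D$, whose Dirichlet series is $L(s,\chi_D)/\zeta(s)$, has partial sums controlled by $\mathcal L(\chi)$; moreover $g(p) = \chi_D(p) - 1$ vanishes on primes with $\chi_D(p) = +1$. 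Running the sieve with a $g$-twisted sifting function (in place of the ordinary $\mu$-based one) should then replace the constant $2$ with $1 - \chi_D(aD/(D,q))$, with an additive error $O(\mathcal L(\chi) \log^5 x)$---the $\log^5$ coming from usual sieve logarithms combined with the $(\log D)^{-5}$ decay rate in the hypothesis. The sieve level $Q$ must satisfy $Q^2 \leq x/q$ for the arithmetic remainder to be controlled, so $Q \leq \sqrt{x/q}$; for $q$ approaching $x^{2/3-\varepsilon}$ this pins $Q$ near $x^{1/6}$, which is the limiting case of applicability and explains the exponent $2/3$. The extra factor $D^{-1}$ in the upper bound on $q$ ensures that the effective combined modulus $qD$ still lies below $x^{2/3-\varepsilon}$, so that the $\chi_D$-twisted sieve remains tractable.

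For the ``almost-all'' lower bound, set
$$E(a) = \psi(x,q,a) - \frac{1 - \chi_D(aD/(D,q))}{\phi(q)} \psi(x).$$
The upper bound from the first part yields $E(a) \leq B := O\bigl(\mathcal L(\chi) \log^5 x \cdot \psi(x)/\phi(q)\bigr)$ for every $a$. Summing over $(a,q)=1$ gives $\sum_a \psi(x,q,a) = \psi(x) + O(\log x \log q)$, and the character sum $\sum_{(a,q)=1}\chi_D(aD/(D,q))$ vanishes entirely when $D \nmid q$ (because then $D/(D,q) > 1$ shares a factor with $D$, forcing $\chi_D(D/(D,q)) = 0$) and vanishes by lifted character orthogonality when $D \mid q$; hence $\sum_a E(a) = O(\log^2 x)$. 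Letting $N$ denote the number of $a$ with $E(a) < -h\psi(x)/\phi(q)$,
$$O(\log^2 x) = \sum_a E(a) < -N \cdot \frac{h\psi(x)}{\phi(q)} + (\phi(q)-N) B \leq -N \cdot \frac{h\psi(x)}{\phi(q)} + \phi(q) B,$$
which rearranges to $N = O(\phi(q)^2 B / (h\psi(x))) = O(\phi(q) \mathcal L(\chi) \log^5 x / h)$, as claimed (the regime $h \leq \mathcal L(\chi) \log^5 x$ being trivial).

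The main obstacle will be the upper bound near the boundary $q \sim x^{2/3-\varepsilon}$, where the sieve level $Q \sim x^{1/6}$ is small enough that controlling the $\chi_D$-twisted main term and the arithmetic remainder simultaneously is delicate. As in Friedlander-Iwaniec and the author's earlier work, I expect one will need to use the exceptional character twice---once to create the $1 - \chi_D$ factor in the main term, and once to cancel what would otherwise be a dominant remainder---which is the fundamental technique being pushed here to its natural $\theta = 2/3$ limit.
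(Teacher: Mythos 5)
Your second half---the averaging over $a\in\mathbb Z_q^*$, the observation that $\sum_{(a,q)=1}\chi_D(aD/(D,q))=0$, and the resulting count of exceptional $a$---is correct and is essentially verbatim the paper's argument (Section 8), so there is nothing to fix there. The gap is the upper bound. You propose a Selberg-type sieve on $\mathcal A_a=\{n\leq x:n\equiv a\pmod q\}$ with ``$g$-twisted'' weights for $g=\mu\ast\chi_D$, but this is a heuristic, not a proof: you never construct the weights, never explain what positivity structure makes them a valid upper-bound sieve, and never compute the main term. The central assertion---that twisting the sifting function converts the Brun--Titchmarsh constant into $1-\chi_D(aD/(D,q))$ with additive error $O(\mathcal L(\chi)\log^5 x)$---is simply asserted. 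It is also quantitatively suspicious: at the level $Q\leq\sqrt{x/q}$ you cite, with $q\sim x^{2/3-\varepsilon}$ so $Q\sim x^{1/6}$, even the classical untwisted Selberg/Brun--Titchmarsh bound gives a constant of order $2\log x/\log(x/q)\approx 6$ rather than $2$, and a mere change of weights does not manifestly recover the loss coming from such a shallow level. Finally, attributing the $x^{2/3}$ barrier to the remainder condition $Q^2\leq x/q$ is the wrong diagnosis: with that philosophy the trivial $O(1)$-per-modulus remainder bound cannot push past $\theta=1/2$ in any useful way.

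The paper's actual mechanism is different in kind. It uses the pointwise envelope $\Lambda(n)\leq\lambda'(n)$ with $\lambda'=\log\ast\chi_D$, restricted to $R$-rough square-free integers to define $\lambda_W'$, giving $\psi(x,q,a)\leq S_W'(x,q,a)+O(R\log^2 x)$ (Lemma~\ref{psibd}). The hard work is then the equidistribution of $S_W'$ modulo $q$ in the full range $Dq<x^{2/3-\varepsilon}$, proved by passing through $\lambda_R'$ (Lemmas~\ref{DR},~\ref{DW}) and ultimately reducing to equidistribution of the binary divisor-type sums $S(x,q,a)$ and $S'(x,q,a)$ (Lemmas~\ref{lambdaap} and~\ref{lambda'ap}). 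Those in turn come from a contour shift using the functional equation for $L(s,\chi_q)L(s,\chi_q\chi_D)$ together with the Gauss-sum/Kloosterman cancellation of Corollary~4.1 of Friedlander--Iwaniec; that Weil-bound input, applied to the binary divisor function $1\ast\chi_D$, is the genuine source of the $x^{2/3}$ range, and nothing in your sketch plays its role. To salvage your approach you would at minimum need to replace the trivial sieve-remainder bound with a bilinear dispersion estimate using Kloosterman sums, at which point you have effectively reconstructed the paper's argument.
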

This result is obviously non-trivial if $L(1,\chi)=o\left(\log^{-5} x\right)$.

If $D|q$ and $\chi(a)=1$, the above bound can be improved.
\begin{theorem}\label{Main1.5}
Let $x$, $D$, $q$, and $a$ be as in Theorem \ref{Main1}, with the additional restriction that $D|q$ and $\chi(a)=1$.   Then
$$\psi(x,q,a)\leq \frac{xL(1,\chi)\log x}{q}+O\left(Dq^{\frac 12+\alpha}\right).$$
\end{theorem}
This is non-trivial if $L(1,\chi)\leq \frac{c}{\log x}$ for some $C$.

For the other result, we recall that the Bombieri-Vinogradov theorem and the Elliott-Halberstam conjecture consider the question of which $\theta$ allow for the following inequality to hold for arbitrary $A$:
\begin{gather}\label{EHfull}\sum_{q\leq x^\theta}\max_{(a,q)=1}\left|\psi(x,q,a)-\frac{\psi(x)}{\psi(a)}\right|\ll \frac{x}{\log^A x}.
\end{gather}
It is known that (\ref{EHfull}) holds for $\theta<1/2$, and the Elliott-Halberstam conjecture posits that this holds for all $\theta<1-\varepsilon$ for any $\varepsilon>0$.  Under the assumption of Siegel zeroes, it has been proven by the current author \cite{WrSiAP} that this holds for $\theta=\frac{16}{31}-\varepsilon$.

However, if one changes the inequality in (\ref{EHfull}) to consider the sum over a fixed congruence class, we can consider instead the weaker question of which $\theta$ allow for the following:
\begin{gather}\label{EHfixed}\sum_{\substack{q\sim x^\theta \\ (a,q)=1}}\left|\psi(x,q,a)-\frac{\psi(x)}{\phi(q)}\right|=o\left(x\right).
\end{gather}
It is known that when considering (\ref{EHfixed}) instead of (\ref{EHfull}), one can move slightly past $\theta=1/2$ to $\theta=1/2+h(x)$ for any function $h$ such that $h(x)=o(1)$ \cite{BFI}. (One can also move even further past $\theta$ for both (\ref{EHfull}) and (\ref{EHfixed}) if one is to consider specific well-chosen subsets of $q\sim Q$ - see e.g. \cite{MaWF1} for a more thorough discussion of such results.)

Since (\ref{EHfixed}) appears to be slightly more tractable than (\ref{EHfull}) in the unconditional setting, it would stand to reason that this would be true under the assumption of a Siegel zero as well.  Here, we prove that this is indeed the case:

\begin{theorem}\label{Main2}
Let $a\in \mathbb Z$, let $x$ and $D$ be such that $\log D=(\log x)^{\kappa}$ for some $\kappa<1$, and let $Q$ be such that $Q<x^{\frac 23-\varepsilon}$.  Then
$$\sum_{\substack{q\sim Q \\ (a,q)=1}}\left|\psi(x,q,a)-\frac{\psi(x)}{\phi(q)}\right|\ll x\mathcal L(\chi)\log^5 x.$$
\end{theorem}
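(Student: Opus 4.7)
The plan is to derive Theorem \ref{Main2} from the same bilinear-sum machinery that underpins Theorem \ref{Main1}, using sign coefficients to linearize the absolute value. Setting $\epsilon_q = \mathrm{sgn}(\psi(x,q,a) - \psi(x)/\phi(q))$ with $|\epsilon_q|\le 1$, we have
\begin{equation*}
\sum_{\substack{q\sim Q\\(a,q)=1}}\left|\psi(x,q,a)-\frac{\psi(x)}{\phi(q)}\right| = \sum_{\substack{q\sim Q\\(a,q)=1}}\epsilon_q\psi(x,q,a) - \psi(x)\sum_{\substack{q\sim Q\\(a,q)=1}}\frac{\epsilon_q}{\phi(q)}.
\end{equation*}
Thus it suffices to prove that, uniformly in any choice $|\epsilon_q|\le 1$, the weighted sum $\sum_q \epsilon_q\psi(x,q,a)$ matches $\psi(x)\sum_q \epsilon_q/\phi(q)$ up to error $O(x\mathcal L(\chi)\log^5 x)$. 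A preliminary check, exploiting that $\chi_D(aD/(D,q))$ vanishes unless $D\mid q$, shows that the correction term $\chi_D(aD/(D,q))\psi(x)/\phi(q)$ from Theorem \ref{Main1} contributes only $O(x/D)$ to the sum, which is absorbed by the target bound in the Siegel-zero regime.

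Next, I would apply Heath--Brown's combinatorial identity to $\Lambda(n)$ and dyadically decompose, reducing to the estimation of bilinear forms
\begin{equation*}
\sum_{\substack{q\sim Q\\(a,q)=1}}\epsilon_q\!\left(\sum_{\substack{m\sim M,\,n\sim N\\mn\le x,\,mn\equiv a\pmod q}}\alpha_m\beta_n - \frac{1}{\phi(q)}\sum_{\substack{m\sim M,\,n\sim N\\mn\le x,\,(mn,q)=1}}\alpha_m\beta_n\right),
\end{equation*}
with $MN\asymp x$ and $|\alpha_m|,|\beta_n|\ll \log x$. Type~I sums, in which one variable ranges over a short interval, are handled by explicitly evaluating the inner sum in the progression modulo $q$ and applying standard divisor-function estimates after swapping summation order.

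The Type~II regime $M,N\in[x^{1/3+\varepsilon},x^{2/3-\varepsilon}]$ is the heart of the matter. Following the dispersion method as deployed in \cite{FI03} and \cite{WrSiAP}, I would apply Cauchy--Schwarz jointly in the variables $q$ and $n$; the resulting diagonal term is controlled by standard divisor counts, while the off-diagonal, after reparametrizing in terms of twisted character sums against $\chi_D$, absorbs a factor of $\mathcal L(\chi)$ via the Siegel-zero identity $\psi(y,\chi_D)=-y+O(y\mathcal L(\chi))$. The main obstacle is closing the estimate at the endpoint $Q=x^{2/3-\varepsilon}$: once $Q$ reaches this size, $M$ and $N$ can both be forced close to $x^{1/2}$, and the dispersion diagonal and off-diagonal contributions balance only through the full $\mathcal L(\chi)$ savings from the Siegel zero---the same structural barrier identified in \cite{FI03}. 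The free coefficients $\epsilon_q$ pose no additional difficulty since Cauchy--Schwarz is applied in a form insensitive to their values; indeed, the averaging over $q$ they supply is precisely what lifts the asymptotic from the pointwise Theorem \ref{Main1} to the averaged Theorem \ref{Main2} over the same full range of $q$.
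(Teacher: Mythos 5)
Your proposal diverges substantially from the paper's argument, and I do not think it closes the gap at $Q=x^{2/3-\varepsilon}$. The paper does not use a Heath--Brown combinatorial decomposition, sign-linearization, or the dispersion method for this theorem. Instead, it replaces $\Lambda$ at the outset by the Siegel-model weight $\lambda'_W=(\chi\ast\log)\cdot\mathbf 1_{P(n)>R}\,\mu^2$, writes $\lambda'_W$ over $n\equiv a\pmod q$ in terms of a main term $\Lambda(n_{-1})$ plus a small bilinear remainder in which $\lambda_W$ contributes an explicit $L(1,\chi)$ savings, and then uses the inequality
\[
\sum_{q\sim Q}\ \mathop{\sum\sum}_{\substack{n_1n_{-1}\le x \\ n_1n_{-1}\equiv a\ (q)}}\lambda_W(n_1)\lambda'_W(n_{-1})\ \le\ \mathop{\sum\sum}_{\substack{n_1n_{-1}\le x}}\lambda_W(n_1)\lambda'_W(n_{-1})\,\tau(n_1n_{-1}-a)
\]
followed by Munshi's bound $\tau(m)\ll\sum_{d\mid m,\ d\le m^{1/r}}\tau(d)^\beta$ with $r>2$. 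This converts the average over large moduli $q\sim Q$ into a single sum over small moduli $d\le x^{1/3-\alpha}$, at which point Shiu-type bounds and the $L(1,\chi)$ savings coming from $\lambda$ finish the job. That replacement of the $q$-average by a divisor function, plus Munshi's lemma, is precisely what allows the modulus to reach $x^{2/3-\varepsilon}$; nothing in your sketch plays this role.

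The concrete gap in your route is in the Type~II / dispersion step. You assert that after Cauchy--Schwarz in $q$ and $n$, the off-diagonal "absorbs a factor of $\mathcal L(\chi)$ via $\psi(y,\chi_D)=-y+O(y\mathcal L(\chi))$", but you give no mechanism for how the Siegel zero enters the exponential-sum (Kloosterman) off-diagonal that arises from the dispersion. That is not a cosmetic omission: the dispersion-with-Kloosterman approach in the presence of a Siegel zero is exactly what \cite{FI03} and \cite{WrSiAP} carried out, and even with those exponential-sum savings the range of the modulus was limited to $\theta=233/462$, $\theta=30/59$, or $\theta=16/31$ --- all well short of $2/3$. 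There is no indication in your argument of how you would do better than those papers inside the same bilinear/dispersion framework, and the paper's own discussion of the $\theta=2/3$ obstruction makes clear that the key advance here is reducing to a \emph{binary} divisor problem via (\ref{fg}) and Lemma~\ref{Mun}, not a more careful dispersion analysis. Without that reduction, your argument plausibly recovers a weaker exponent but does not prove Theorem~\ref{Main2} as stated.

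Two smaller points. First, the linearization by $\epsilon_q$ and the $D\mid q$ observation for $\chi_D(aD/(D,q))$ are fine, but they are not needed: the paper controls $\sum_q|S'_W(x,q,a)-\psi(x,q,a)|$ and $\sum_q|S'_W(x,q,a)-S'_W(x,q)/\phi(q)|$ directly, with nonnegative summands, so no sign device is required. Second, a Heath--Brown decomposition of $\Lambda$ produces M\"obius-weighted coefficients with no built-in relation to $\chi_D$, so the place in your argument where the Siegel zero is supposed to enter is structurally unclear; by contrast the paper works with $\lambda=\chi\ast 1$ and $\lambda'=\chi\ast\log$ from the start, so the factor $L(1,\chi)$ appears mechanically from $\sum_{n\le y}\lambda(n)=yL(1,\chi)+O(D\sqrt y)$.
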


\section{$\lambda$ in arithmetic progressions}\label{Deflambda}



Let $\ast$ denote the Dirichlet convolution, let $\chi=\chi_D$ be an exceptional character mod $D$, and define
\begin{gather*}
\lambda=\chi\ast 1,\\
\lambda'=\chi\ast \log,\\
\nu=\mu\ast\mu\chi,
\end{gather*}
as well as the following sums:
\begin{gather*}
S'(x,q,a)=\sum_{\substack{n\leq x \\n\equiv a \pmod q}}\lambda'(n),\\
S(x,q,a)=\sum_{\substack{n\leq x \\n\equiv a \pmod q}}\lambda(n),\\
S'(x,q)=\sum_{\substack{n\leq x \\(n,q)=1}}\lambda'(n),\\
S(x,q)=\sum_{\substack{n\leq x \\(n,q)=1}}\lambda(n).
\end{gather*}
In \cite{FI03}, the authors evaluate $\psi(x,q,a)$ by exploiting the fact that
$$\Lambda=\mu\ast \log =\mu\ast \log \ast (1\ast\mu)\chi=\log\ast \chi\ast \mu\ast\chi\mu=\lambda'\ast \nu.$$
The authors first show that
$$\mathop{\sum\sum}\limits_{\substack{dm\leq x \\ dm\equiv a\pmod q \\ d\leq D^2}}\lambda'(m)\nu(d)=\frac{1-\chi\left(\frac{aD}{(D,q)}\right)}{\phi(q)} \mathop{\sum\sum}\limits_{\substack{dm\leq x \\ dm\equiv a\pmod q \\ d\leq D^2}}\lambda'(m)\nu(d)+O\left(q^{\frac 12+\varepsilon}+\frac{x}{q}\log^3 xL(1,\chi)\right).$$
The rest of the paper then shows that
\begin{gather}\label{FIerror}\mathop{\sum\sum}\limits_{\substack{dm\leq x \\ dm\equiv a\pmod q \\ d>D^2}}\lambda'(m)\nu(d)\ll \frac xq\log^A xL(1,\chi)+q^{\frac{115}{58}+\alpha}\end{gather}
for a large value of $A$ and a small $\alpha$.  To do this, they use the fact that $|\nu(d)|\leq \lambda(d)$ and then apply a method of Landreau \cite{Land} to reduce the quaternary sum $\lambda'\ast \lambda$ to a ternary sum $\lambda\ast 1$, which allows them to apply results from ternary sums in arithmetic progressions.  The work of \cite{WrSiAP} treats these ternary sums a bit more carefully and then also uses more recent Kloosterman sum results to improve the error term in (\ref{FIerror}).

In this paper, we will use the exceptional character $\chi$ to mimic the M\"{o}bius function $\mu$ more directly.  More specifically, an exceptional character $\chi$ is a multiplicative function for which $\chi(p)=-1$ for ``most" primes $p$ in a large region beginning around $D$ and ending at some large $D_L$, where $D_L$ depends on both $D$ and the location of the zero.  So for a natural number $n$ whose divisors are on this interval, it will usually be the case that $\chi(n)=\mu(n)$.  So let
$$R=\max\{D^5,xe^{-(\log x)^{\frac 12}}\}.$$
For a multiplicative function like $\mu$, Tao and Teravainen \cite{TT} coined the term "Siegel model" to describe the function $\mu_{Siegel}$ where $\mu_{Siegel}=\mu(p)$ if $p\leq R$ and $\mu_{Siegel}=\chi(p)$ if $p>R$.  We simplify this idea to consider only $R$-rough numbers, as we define
$$\lambda_R'(n)=\lambda'(n)\textbf{1}_{P(n)>R},$$
$$\lambda_R(n)=\lambda(n)\textbf{1}_{P(n)>R},$$
where $P(n)$ denotes the smallest prime divisor of $n$.
For ease of computation, we also define the square-free analogue to $\lambda_R'$ and $\lambda'_R$:
$$\lambda'_W(n)=\lambda_R'(n)\mu(n)^2,$$
$$\lambda_W(n)=\lambda_R(n)\mu(n)^2,$$
which will be helpful since $\lambda(jk)=\lambda(j)\lambda(k)$ if $jk$ is square-free.

Since $\chi(n)=\mu(n)$ should be true for most $n$ in the support of $\lambda_W'$, it should usually be the case that $\lambda_W'(n)=\Lambda(n)$.  We can then use our understanding of $\lambda'$ in arithmetic progressions mod $q$ to gain a similar understanding of $\lambda'_W$ in arithmetic progressions.  Since $\Lambda(n)\leq \lambda'(n)$ when $n$ is not a prime power, we can then use this to give an upper bound for Chebyshev's function $\psi(x,q,a)$:
\begin{gather}\label{psiSW}\psi(x,q,a)\leq \sum_{\substack{n\leq x \\ n\equiv a\pmod q}}\lambda'_W(n)=\frac{x(1-\chi_D\left(\frac{aD}{(D,q)}\right)+o(1))}{\phi(q)}.\end{gather}
As this upper bound is actually fairly sharp, we know that for almost all $a$, the inequality on the left-hand side can actually be replaced with an equality, else one would end up with $\psi(x)$ having a main term smaller than $x$, which is impossible.

For the second main theorem, a key insight is that for any nonnegative function $f$,
\begin{gather}\label{fg}
\sum_{q\sim Q}\mathop{\sum}\limits_{\substack{n\leq x \\ n\equiv a\pmod q}}f(n)\leq \mathop{\sum}\limits_{\substack{n\leq x }}f(n)\tau(n-a),
\end{gather}
where $\tau(n)$ is the divisor function.  Landreau \cite{Land} introduced the idea of bounding a divisor function via inequalities of the form
$$\tau(n)\ll \sum_{\substack{d|n \\ d<n^\frac 1r}}\tau(d)^{\beta}$$
for some $\beta$ that depends on $r$, and Friedlander and Iwaniec \cite{FI03} later applied this to results about Siegel zeroes.  We will use the strongest available bound of this form, which comes from Munshi \cite{Mu}:
\begin{lemma}[Munshi, 2011]\label{Mun}
For any natural number $n$ and any real number $r>2$,
$$\tau(n)\ll \sum_{\substack{d|n \\ d\leq n^\frac 1r}}\tau(d)^\beta,$$
where
$$\beta=-\frac{\log r}{\log 2}+r\left(1+\left(1-\frac 1r\right)\frac{\log\left(1-\frac 1r\right)}{\log 2}\right).$$
In particular, if $r\leq 4$ then $\beta<1$.
\end{lemma}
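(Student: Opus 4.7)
My plan is to establish this Landreau--Munshi inequality by constructing a map from the divisors of $n$ to the small divisors (those of size at most $n^{1/r}$), controlling the preimage size of each small $d$ by $\tau(d)^{\beta}$, and letting an extremal-case analysis pin down the exponent $\beta$.

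First I would set up the naive map. Writing $m=\prod_p p^{c_p}$ for $m\mid n$, define $d(m)=\prod_p p^{\lfloor c_p/r\rfloor}$. Plainly $d(m)\le m^{1/r}\le n^{1/r}$, and for any fixed $d=\prod_p p^{e_p}$ with $d\mid n$ the preimage consists of tuples with $c_p\in[re_p,\,re_p+r-1]\cap[0,a_p]$ for $p\mid d$ and $c_p=0$ otherwise, contributing at most $r^{\omega(d)}\le\tau(d)^{\log_{2}r}$ preimages. This already yields the inequality with the weaker exponent $\beta=\log_{2}r$, but is not sharp.

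Next I would locate the extremal configuration and let it dictate the optimal $\beta$. Take $n=p_1\cdots p_m$ squarefree with primes of comparable size, so $\tau(n)=2^m$ and the divisors $d\le n^{1/r}$ are precisely the products of at most $m/r$ of the $p_i$; then
$$\sum_{\substack{d\mid n\\ d\le n^{1/r}}}\tau(d)^{\beta}=\sum_{k\le m/r}\binom{m}{k}2^{k\beta}.$$
A Stirling / entropy estimate shows this is $2^{m(H_{2}(1/r)+\beta/r)}$ up to polynomial factors in $m$, where $H_2$ denotes the binary entropy. Matching $2^m=\tau(n)$ forces
$$\beta\ge r\bigl(1-H_{2}(1/r)\bigr)=-\frac{\log r}{\log 2}+r\left(1+\left(1-\frac{1}{r}\right)\frac{\log(1-1/r)}{\log 2}\right),$$
which is precisely the exponent in the lemma; at $r=4$ this gives $\beta=3\log_{2}3-4<1$, confirming the addendum.

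The final step is to transfer this sharp $\beta$ from the squarefree extremal to arbitrary $n$. The idea is to refine the map $m\mapsto d(m)$ by allowing $d$ to include primes of $n$ absent from $m$, weighting the construction so that the expected preimage size matches $\tau(d)^{\beta}$ rather than the cruder $r^{\omega(d)}$. The hard part will be proving uniformity across all $n$ with mixed prime-power multiplicities: the squarefree regime saturates the binomial/entropy bound, while clusters of high-multiplicity primes demand a separate saddle-point analysis of the local sum $\sum_{b\le a_p/r}(b+1)^{\beta}$ versus the target $a_p+1$. Once both regimes are controlled with one and the same constant depending only on $r$, multiplicativity across primes closes the argument.
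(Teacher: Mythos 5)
The paper does not prove this lemma; it is quoted verbatim from Munshi's 2011 paper, so there is no internal proof to compare against. Evaluating your proposal on its own terms, there is a genuine gap, and I think you are aware of where it lies.

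Your first two steps are sound and useful: the map $m\mapsto\prod_p p^{\lfloor c_p/r\rfloor}$ does give the inequality with the weaker exponent $\log_2 r$, and the squarefree extremal analysis (which is really a lower-bound computation) correctly identifies $\beta=r\bigl(1-H_2(1/r)\bigr)$ as the best exponent one could hope for, matching the lemma's formula. But that computation only shows $\beta$ cannot be taken smaller; it does not show the inequality holds with this $\beta$ for all $n$. The gap between $\log_2 r$ (which is $\geq 1$ for $r\geq 2$) and the claimed $\beta$ (which is $<1$ for $r<4$) is exactly the content of the lemma, and your third step does not close it. The sketch there — refine the map so that ``the expected preimage size matches $\tau(d)^\beta$,'' handle squarefree and high-multiplicity primes separately, then ``multiplicativity across primes closes the argument'' — is not a proof, and the final appeal to multiplicativity is suspect as stated: the sum $\sum_{d\mid n,\, d\leq n^{1/r}}\tau(d)^\beta$ is \emph{not} multiplicative in $n$, because the size constraint $d\leq n^{1/r}$ couples the prime-power blocks together. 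That coupling is precisely what makes Landreau--Munshi inequalities nontrivial; a prime-by-prime saddle-point analysis of local sums cannot simply be multiplied out. To complete the argument you would need a genuinely global construction, e.g. an explicit assignment $m\mapsto d$ with $d\leq n^{1/r}$ whose fibres are provably $O(\tau(d)^\beta)$ uniformly in $n$, or an inductive/greedy decomposition of divisors in the style of Landreau's original paper and Munshi's refinement. As it stands, you have proved the inequality with $\beta=\log_2 r$ and shown the stated $\beta$ is sharp, but not the lemma itself.
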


Hence, we can bound (\ref{fg}) as
\begin{gather}
\ll \sum_{d\leq x^\frac 14}\tau(d)\mathop{\sum}\limits_{\substack{n\leq x \\ n\equiv a\pmod d}}f(n).
\end{gather}
We are now left to consider the inner sum over a smaller modulus than $q$, which makes it easier to show that the sum is small.  In practice, we will be able to decompose $\Lambda$ into a main term, which we can evaluate directly, and an error term, which is a sum that looks like (\ref{fg}).

The paper, then, will proceed in six parts.  Sections \ref{background1} and \ref{background2} establish the behavior of $\lambda'$ and $\lambda$, both modulo $q$ and in general.  Section \ref{lambdaR} shows the functions $\lambda'$ and $\lambda'_R$ are similarly equidistributed in arithmetic progressions, while Section \ref{lambdaW} shows that this distribution holds for $\lambda'_W$ as well.  Section \ref{final} uses the relationship between $\lambda'_W$ and $\Lambda$ to find an upper bound for $\psi(x,q,a)$ and prove Theorem \ref{Main1}.  Finally, Section \ref{final2} uses the trick in (\ref{fg}) to prove Theorem \ref{Main2}

We noted earlier that there is a technical obstruction at $\theta=2/3$.  The obstruction comes from the fact that these methods work by analyzing the behavior of a twisted $k$-fold divisor function in arithmetic progressions.  In \cite{FI03}, Friedlander and Iwaniec removed one variable from the quaternary divisor function $\lambda\ast \lambda'$ so that they could consider instead a ternary divisor function $\lambda\ast 1$.  Since the ternary divisor function allows one to find information modulo $q$ for a $q>\sqrt x$, this allowed the authors to move $q$ past the $x^\frac 12$ barrier to $\theta=\frac{233}{462}$.  Our insight is that one can further reduce this problem to one about a twisted binary divisor function, and since the binary divisor function is understood modulo $q$ for $q\leq x^{\frac 23-\varepsilon}$, our result allows for $q$ up to this bound.  It is not clear that one could move beyond the $x^{\frac 23}$-barrier, however, while still using the machinery of divisor functions.

\section{Background Lemmas: Distribution of $\lambda$}\label{background1}

In this section, we examine the distribution of $\lambda$ and $\lambda'$ in arithmetic progressions.  For the latter, we simply restate Proposition 4.2 of \cite{FI03}:

\begin{lemma}\label{lambda'ap} For any $\alpha>0$,
$$S'(x,q,a)=\frac{1-\chi_D\left(\frac{aD}{(D,q)}\right)}{\phi(q)}S'(x,q)+O\left((Dq)^{\frac 12+\alpha}\right).$$
This is non-trivial if $Dq<x^{\frac 23-3\alpha}$.
\end{lemma}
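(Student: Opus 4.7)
The plan is to expand the convolution $\lambda'=\chi\ast\log$, detect the congruence $dm\equiv a\pmod q$ via Dirichlet characters modulo $q$, and separate the contributions of the two ``main-term'' characters from the error contributions, which are bounded by standard $L$-function estimates of size $(Dq)^{1/2+\alpha}$.

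Since $(a,q)=1$, any pair $(d,m)$ with $dm\equiv a\pmod q$ automatically satisfies $(d,q)=(m,q)=1$, so orthogonality gives
$$S'(x,q,a)=\frac{1}{\phi(q)}\sum_{\psi\bmod q}\bar\psi(a)\,T(\psi),\qquad T(\psi)=\sum_{dm\leq x}(\chi\psi)(d)\,\psi(m)\log m.$$
The principal character $\psi=\psi_0$ gives $T(\psi_0)=S'(x,q)$, yielding the contribution $S'(x,q)/\phi(q)$.

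Next, I would isolate the distinguished character $\psi_\chi$ mod $q$ for which $\chi\psi_\chi$ is principal modulo $[D,q]=Dq/(D,q)$; this $\psi_\chi$ is essentially $\chi$ reinterpreted as a character mod $q$ (when such a reinterpretation exists; otherwise its contribution is absent, in which case $\chi(aD/(D,q))$ vanishes for structural reasons). Because $\chi$ is real, $\bar\psi_\chi(a)$ evaluates to $\chi(aD/(D,q))$, with the factor $D/(D,q)$ accounting for the difference between the natural moduli of $\chi$ and $\psi_\chi$. Evaluating $T(\psi_\chi)$ by the same reasoning as for $\psi_0$ then produces the second main-term contribution $-\chi(aD/(D,q))S'(x,q)/\phi(q)$.

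For every remaining $\psi$, both $\psi$ and $\chi\psi$ are non-principal with conductor dividing $[D,q]\leq Dq$. Applying Perron's formula,
$$T(\psi)=\frac{1}{2\pi i}\int_{(c)}L(s,\chi\psi)(-L'(s,\psi))\frac{x^s}{s}\,ds,$$
I would shift the contour past $\mathrm{Re}(s)=0$ and invoke Polya--Vinogradov or convexity bounds for the two $L$-functions of conductor $\leq Dq$ to obtain $T(\psi)\ll (Dq)^{1/2+\alpha/2}$. Summing over the $\phi(q)$ characters and absorbing factors of $\phi(q)^{1/2}$ into $(Dq)^{\alpha}$ yields the claimed error. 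The main obstacle is the bookkeeping in the second step: carefully identifying $\psi_\chi$, tracking how it restricts across the three moduli $D$, $q$, and $[D,q]$, and verifying that its contribution reduces cleanly to $\chi(aD/(D,q))$. The contour-shift bound, by contrast, is routine once one has the Polya--Vinogradov bounds for $L(s,\chi\psi)$ with conductor controlled by $Dq$.
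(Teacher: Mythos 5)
Your main-term bookkeeping (orthogonality mod $q$, isolating $\psi_0$ and the character $\psi_\chi$ with $\chi\psi_\chi$ principal, and reading off $\chi(aD/(D,q))$) matches the structure of the paper's analogous argument for Lemma \ref{lambdaap}, which follows Proposition 4.2 of \cite{FI03}. The critical estimate, however, is where your proposal breaks down. You claim that for each non-main character $\psi$ the sum $T(\psi)=\sum_{n\le x}\lambda'(n)\psi(n)$ can be bounded by $(Dq)^{1/2+\alpha/2}$ using ``Polya--Vinogradov or convexity bounds.'' No such pointwise bound is available. $T(\psi)$ is a partial sum of length $x$ of the coefficients of the degree-two $L$-function $L(s,\chi\psi)L'(s,\psi)$ of conductor $\le Dq^2$; a hyperbola split with Polya--Vinogradov applied to each half gives $\ll \sqrt{x}\,\sqrt{Dq}\,(\log x)^{O(1)}$, and a truncated Perron argument with convexity on the critical line gives bounds of comparable quality (roughly $\sqrt{x}\,(Dq^2)^{1/4+\varepsilon}$ after optimizing the truncation height). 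In the regime $Dq<x^{2/3}$ where the lemma is non-trivial, both of these exceed $(Dq)^{1/2+\alpha}$ by a factor on the order of $\sqrt{x}$. Bounding $T(\psi)$ character by character simply cannot reach the advertised error term.

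The ingredient you are missing is cancellation \emph{across} characters, not within a single $T(\psi)$. After shifting the contour (which, incidentally, the paper makes legitimate by weighting with a smooth cutoff $f$ of transition length $y=q^{1+\alpha}$ so that $\tilde f(s)$ decays; the raw Perron integral $\int L(s,\chi\psi)(-L'(s,\psi))\tfrac{x^s}{s}\,ds$ does not converge on $\mathrm{Re}(s)<0$) and applying the functional equation, one is left with a bilinear form in normalized Gauss sums, $\sum_{\chi_q}\chi_q(n\bar a)\,W(\chi_q)W(\chi_q\chi_D)$. Corollary 4.1 of \cite{FI03} gives $\ll\sqrt{Dq}$ for this sum, a square-root saving over the trivial bound $\phi(q)$, and it is precisely this saving that produces the final $(Dq)^{1/2+\alpha}$. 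Your proposal discards this structure by splitting the character sum apart too early. (Also, as a side remark, your comment about ``absorbing factors of $\phi(q)^{1/2}$'' is spurious: $\tfrac{1}{\phi(q)}\sum_{\psi}|T(\psi)|$ with $\phi(q)$ terms each $\ll B$ is simply $\ll B$, with no leftover power of $\phi(q)$ to absorb.)
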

Following largely the same framework as the \cite{FI03} proof of the above, we can prove a similar theorem about $\lambda$:

\begin{lemma}\label{lambdaap} For any $\alpha>0$,
$$S(x,q,a)=\frac{1+\chi_D\left(\frac{aD}{(D,q)}\right)}{\phi(q)}S(x,q)+O\left((Dq)^{\frac 12+\alpha}\right).$$
This is non-trivial if $Dq<x^{\frac 23-3\alpha}$.
\end{lemma}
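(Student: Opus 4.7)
The plan is to follow the strategy of the proof of Lemma \ref{lambda'ap} (Proposition 4.2 of \cite{FI03}), replacing the convolution $\chi\ast\log$ with $\chi\ast 1$ throughout. First I would open $\lambda(n)=\sum_{d\mid n}\chi(d)$ and apply the Dirichlet hyperbola method at a cutoff $y\asymp(Dq)^{1/2}$, writing
$$S(x,q,a)=\sum_{\substack{d\leq y\\(d,q)=1}}\chi(d)\sum_{\substack{m\leq x/d\\m\equiv a\bar d\pmod q}}1\;+\;\sum_{\substack{m\leq x/y\\(m,q)=1}}\sum_{\substack{y<d\leq x/m\\d\equiv a\bar m\pmod q}}\chi(d)\;-\;T_{12},$$
where $T_{12}$ is the overlap $d,m\leq y$ (absorbed into the error) and $(a,q)=1$ forces $(d,q)=(m,q)=1$ in the surviving terms.

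For the small-$d$ half, the inner count over $m$ is $x/(dq)+O(1)$, and partial summation against $\chi(d)/d$ gives $\tfrac{x}{q}\sum_{d\leq y,\,(d,q)=1}\chi(d)/d+O(y\log y)$, which furnishes the uniform $\tfrac{1}{\phi(q)}S(x,q)$ portion of the main term. For the large-$d$ half, for each fixed $m$ I would evaluate the inner character sum $\sum_{d\equiv a\bar m\pmod q}\chi(d)$ by lifting $\chi$ to a character modulo $\mathrm{lcm}(D,q)=qD/(D,q)$ and separating its principal-type contribution from the rest. The principal-type piece evaluates at the residue $a\bar m$ to $\chi_D(aD/(D,q))\bar\chi(m)$ (using that $\chi$ is real, so $\chi^2=\chi_0$), and reassembling over $m$ produces a main term of $\tfrac{\chi_D(aD/(D,q))}{\phi(q)}S(x,q)$. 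The nonprincipal characters contribute only a Polya--Vinogradov-type error of size $(qD/(D,q))^{1/2+\alpha}$ per $m$, summing to $O((Dq)^{1/2+\alpha})$ after optimizing $y$.

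The main obstacle will be the careful identification, in the large-$d$ half, of the principal-type character modulo $\mathrm{lcm}(D,q)$ and the verification that its main term produces $+\chi_D(aD/(D,q))/\phi(q)$ rather than $-\chi_D(aD/(D,q))/\phi(q)$. This sign difference from Lemma \ref{lambda'ap} should reflect the symmetry of $\chi\ast 1$ under the swap $d\leftrightarrow m$: both halves of the hyperbola contribute constructively to the $\chi_D$-portion of the main term, whereas for the asymmetric convolution $\chi\ast\log$ the two halves interfere with opposite signs. Once the main-term sign is correctly pinned down, the rest is routine bookkeeping parallel to \cite{FI03}, and the range $Dq<x^{2/3-3\alpha}$ then ensures that the error $(Dq)^{1/2+\alpha}$ is genuinely dominated by the main term of size $\asymp x/q$.
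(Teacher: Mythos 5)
Your approach is genuinely different from the paper's. The paper smooths the sum with a cutoff $f$, writes $S_f(x,q,a)$ as a Mellin contour integral against $Z(s,\chi_q)=L(s,\chi_D\chi_q)L(s,\chi_q)$ over characters $\chi_q\bmod q$, shifts the contour to $\mathrm{Re}(s)=-\varepsilon$, applies the functional equations of both $L$-factors, and then invokes Corollary~4.1 of \cite{FI03} to bound the resulting Kloosterman-type average $\sum_{\chi_q}\chi_q(n\bar a)W(\chi_q)W(\chi_q\chi_D)\ll\sqrt{Dq}$. The hyperbola split with P\'olya--Vinogradov is a plausible-looking elementary substitute, and your reading of the main terms is mostly right --- though the small-$d$ half, where one merely counts $m\equiv a\bar d\pmod q$, contributes only $\tfrac{1}{\phi(q)}S(x,q)$ with no $\chi_D$-dependence; the $\chi_D(aD/(D,q))$-piece arises entirely from the large-$d$ half, not from ``constructive interference of both halves.''

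The real issue is the error accounting, and it is a genuine gap. A P\'olya--Vinogradov bound for a non-principal character to modulus $qD/(D,q)$ gives $\ll(Dq)^{1/2+\alpha}$ \emph{for each fixed $m$}, and you must then sum over $m\leq x/y$. That sum is of size $\asymp\tfrac{x}{y}(Dq)^{1/2+\alpha}$, which is never $O((Dq)^{1/2+\alpha})$: forcing $x/y\ll1$ would require $y\gg x$, at which point the $O(y)$ error from the small-$d$ half is catastrophic. Even in the most favorable case $D\mid q$, where $\chi$ is literally constant on the progression $d\equiv a\bar m\pmod q$ and the per-$m$ error is only $O(1)$, the hyperbola method with pointwise errors optimizes to $O(\sqrt{x})$, which exceeds $(Dq)^{1/2+\alpha}\ll x^{1/3+O(\alpha)}$ throughout the range $Dq<x^{2/3-3\alpha}$. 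The missing ingredient is cancellation \emph{over $m$} --- equivalently, over the twisting characters $\psi\bmod q$ once you open the congruence by orthogonality --- played off against the $d$-sum. In the paper this bilinear cancellation is exactly what Corollary~4.1 of \cite{FI03} supplies after the functional equations dualise the sums. To salvage your elementary route you would need to complete the $d$-sum, retain the resulting Gauss sum as a function of $m$, and then estimate the ensuing bilinear (Kloosterman-type) exponential sum, rather than applying P\'olya--Vinogradov separately for each $m$ and discarding the $m$-dependence.
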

\begin{proof}
Let $f(u)$ be a smooth function supported on $(0,x+y)$ such that $f(u)=1$ on $[1,x]$.  In particular, we will set $y=q^{1+\alpha}$. Moreover, define
$$S_f(x,q,a)=\sum_{n\equiv a \pmod q}\lambda(n)f(n)$$
and
$$S_f(x,q)=\sum_{n\in \mathbb N}\lambda(n)f(n).$$
Note that by Shiu's theorem \cite{Sh},
$$S_f(x,q,a)=S(x,q,a)+O\left(\sum_{\substack{x\leq n\leq x+y \\ n\equiv a \pmod q}}\tau(n)\right)=S(x,q,a)+O\left(q^{2\alpha}\right),$$
and similarly,
$$S_f(x,q)=S(x,q)+O\left(q^{1+2\alpha}\right).$$
For any character $\chi_q$ mod $q$, write
$$Z(s,\chi_q)=\sum_{n\in \mathbb N}\lambda(n)\chi(n)n^{-s}=L(1,\chi_D\chi_q)L(1,\chi_q).$$
and define
$$\tilde{f}(s)=\int_0^\infty f(u)u^{s-1}du.$$
Letting $(1+\varepsilon)$ denote the line at $Re(s)=1+\varepsilon$, we can express $S_f(x,q,a)$ as a contour integral:
$$S_f(x,q,a)=\frac{1}{\phi(q)}\sum_{\chi_q\mbox{ }mod\mbox{ }q}\overline{\chi_q(a)}\frac{1}{2\pi i}\int_{(1+\varepsilon)}Z(s,\chi_q)\tilde{f}(s)ds.$$
Similarly, letting $\chi_0$ denote the principal character mod $q$,
$$S_f(x,q)=\frac{1}{2\pi i}\int_{(1+\varepsilon)}Z(s,\chi_0)\tilde{f}(s)ds,$$
Note that
$$S_f(x,q,a)=\frac{1}{\phi(q)}S_f(x,q)+\frac{1}{\phi(q)}\sum_{\chi_q\neq \chi_0}\overline{\chi_q(a)}\frac{1}{2\pi i}\int_{(1+\varepsilon)}Z(s,\chi_q)\tilde{f}(s)ds.$$
Moreover, if $D|q$ then there exists a character $\chi_q$ such that $\chi_q=\chi_D\chi_0$, and hence we have
$$S_f(x,q,a)=\frac{1+\chi_D(a)}{\phi(q)}S_f(x,q)+\frac{1}{\phi(q)}\sum_{\chi_q\neq \chi_0,\chi_0\chi_D}\overline{\chi_q(a)}\frac{1}{2\pi i}\int_{(1+\varepsilon)}Z(s,\chi_q)\tilde{f}(s)ds.$$
For the remaining sum of characters, we move the contour of integration for each of these integrals to $(-\varepsilon)$.  Since all of the remaining $L(s,\chi)$ have non-principal $\chi$, the L-functions are analytic and hence we have no poles.  So for each such $L$ and $\chi_q$,
\begin{align*}\int_{(1+\varepsilon)}Z(s,\chi_q)\tilde{f}(s)ds= \int_{(-\varepsilon)}L(s,\chi_q)L(s,\chi_q\chi_D)\tilde{f}(s)ds.\end{align*}
We exploit the functional equation
$$L(s,\chi_q)=W(\chi_q)q^{\frac 12-s}X(s)\Gamma(1-s)L(1-s,\chi_q),$$
where  $W(\chi)=\frac{G(\chi)}{\sqrt q}$ is the normalized Gauss sum and $|X(s)|=O(1)$ is dependent only on $s$.  So
\begin{align*}
\frac{1}{\phi(q)}&\sum_{\chi_q\neq \chi_0,\chi_0\chi_D}\int_{(1+\varepsilon)}Z(s,\chi_q)\tilde{f}(s)ds\\
=&\frac{1}{\phi(q)}\sum_{\chi_q\neq \chi_0,\chi_0\chi_D}W(\chi_q)W(\chi_q\chi_D)\overline{\chi_q(a)}\\
&\cdot \frac{1}{2\pi i}\int_{(-\varepsilon)}L(1-s,\chi_q)L(1-s,\chi_q\chi_D)\Gamma(1-s)^2X(s)^2q^{1-2s}D^{\frac 12-s}\tilde{f}(s)ds.
\end{align*}
We can write
$$L(1-s,\chi_q)L(1-s,\chi_q\chi_D)=\sum_{n\in \mathbb N}\frac{c(n)\chi_q(n)}{n^{1-s}},$$
where $|c(n)|\leq \tau(n)$.
Isolating the terms in the integral and sum that have characters $\chi_q$, we then consider
\begin{align*}
\sum_{\chi_q\neq \chi_0,\chi_0\chi_D}&\overline{\chi_q(a)}W(\chi_q)W(\chi_q\chi_D)L(1-s,\chi_q)L(1-s,\chi_D)\\
=&\sum_{n\in \mathbb N}\frac{c(n)}{n^{1-s}}\sum_{\chi_q\neq \chi_0,\chi_0\chi_D}\chi_q(n\bar a)W(\chi_q)W(\chi_q\chi_D).
\end{align*}
By Corollary 4.1 of \cite{FI03} and the fact that $Re(s)=-\varepsilon$, this is
$$\ll \sum_{n\in \mathbb N}\frac{c(n)\sqrt{Dq} }{n^{1+\varepsilon}}\ll \sqrt{Dq}.$$
Moreover, integrating $\tilde{f}$ by parts repeatedly gives
$$\tilde{f}(s)\ll \frac{1}{|s|}\min\left(1,\left(\frac{x}{y|s|}\right)^2\right).$$
Thus,
\begin{align*}
\left|\frac{1}{\phi(q)}\sum_{\chi_q\neq \chi_0,\chi_0\chi_D}\int_{(1+\varepsilon)}Z(s,\chi_q)\tilde{f}(s)ds\right|\ll &\frac{(q)^{\frac 32+2\varepsilon}D^{\frac 12+\varepsilon}}{\phi(q)}\int_{(-\varepsilon)}\tilde{f}(s)ds\ll (Dq)^{\frac 12+3\varepsilon}.
\end{align*}
Letting $\varepsilon=\frac{\alpha}{3}$ then completes the lemma.
\end{proof}

\section{Background Lemmas: Multiplicativity and Decomposition}\label{background2}

Here, we also give two lemmas about the decomposition of $\lambda$ and $\lambda'$ that will help us exploit the multiplicativity of $\lambda$.  The first one allows us to decompose $\lambda'$:

\begin{lemma}\label{split}
Let $(d,n)=1$.  Then
$$\lambda'(dn)=\lambda(d)\lambda'(n)+\lambda'(d)\lambda(n).$$
\end{lemma}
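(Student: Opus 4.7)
The plan is to verify the identity directly from the convolution definition $\lambda'=\chi\ast\log$, exploiting the coprimality hypothesis together with the multiplicativity of $\chi$ and the additivity of the logarithm.

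First I would expand $\lambda'(dn)=\sum_{ab=dn}\chi(a)\log b$ and use the standard fact that when $(d,n)=1$, every factorization $ab=dn$ is uniquely of the form $a=a_1a_2$, $b=b_1b_2$ with $a_1b_1=d$ and $a_2b_2=n$ (take $a_1=(a,d)$, $a_2=(a,n)$). Under this split, $(a_1,a_2)=1$, so $\chi(a_1a_2)=\chi(a_1)\chi(a_2)$, while $\log(b_1b_2)=\log b_1+\log b_2$.

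Next I would substitute these into the expansion and distribute the two terms coming from the logarithm. This turns the single double sum into a sum of two products of one-variable convolution sums: one in which the $\log$ lands on the divisor of $d$ (producing $\sum_{a_1b_1=d}\chi(a_1)\log b_1$ times $\sum_{a_2b_2=n}\chi(a_2)$), and one in which it lands on the divisor of $n$ (producing $\sum_{a_1b_1=d}\chi(a_1)$ times $\sum_{a_2b_2=n}\chi(a_2)\log b_2$). Recognizing each inner sum as either $\lambda$ or $\lambda'$ evaluated at $d$ or $n$ yields $\lambda'(d)\lambda(n)+\lambda(d)\lambda'(n)$, as required.

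There is no genuine obstacle: the argument is elementary bookkeeping resting only on the coprime factorization of divisors and the elementary properties of $\chi$ and $\log$. Conceptually, the statement is the expected ``Leibniz rule'' for the pair $(\lambda,\lambda')$ on coprime inputs, reflecting that $\log$ is additive against a multiplicative partition of divisors, so once the decomposition is in place the identity is forced.
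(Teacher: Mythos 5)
Your proof is correct and is essentially the paper's proof: both expand $\lambda'(dn)$ as a convolution over the factorizations $ab=dn$, split each factor into its $d$-part and $n$-part using $(d,n)=1$, apply multiplicativity of $\chi$ and additivity of $\log$, and then recognize the two resulting products as $\lambda'(d)\lambda(n)+\lambda(d)\lambda'(n)$. The only difference is notational (the paper sums over $l\mid dn$ with the cofactor written as $dn/l$).
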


\begin{proof}
We can write
$$\lambda'(dn)=\sum_{l|dn}\chi(l)\log\left(\frac{dn}{l}\right).$$
Since $(d,n)=1$, we can split $l$ uniquely into $l=d_1n_1$ where $d_1|d$ and $n_1|n$.  So
\begin{align*}
\lambda'(dn)=&\sum_{d_1|d}\sum_{n_1|n}\chi(d_1)\chi(n_1)\log\left(\frac{d}{d_1}\cdot \frac{n}{n_1}\right)\\
= &\sum_{d_1|d}\sum_{n_1|n}\left(\chi(d_1)\chi(n_1)\log\left(\frac{d}{d_1}\right) +\chi(d_1)\chi(n_1)\log\left(\frac{n}{n_1}\right)\right)\\
= &\lambda'(d)\lambda(n) +\lambda(d)\lambda'(n).
\end{align*}
\end{proof}

In order to transition from $\lambda_R'$ to $\lambda'_W$ as mentioned in the introduction, we will also need to be able to split $\lambda(n)$ into square-free and square parts.  We give an inequality for this in the following lemma.

\begin{lemma}\label{sqsplit}
Write $n=st$ where $s$ is a square and $t$ is square-free.  Then
$$\lambda(n)\leq \lambda(s)\lambda(t),$$
\end{lemma}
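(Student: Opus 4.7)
The plan is to reduce this to a local (prime-by-prime) check, exploiting the fact that $\lambda = \chi \ast 1$ is multiplicative even though the decomposition $n = st$ forces $s$ and $t$ to share primes whenever $n$ is not squarefree. Writing $v_p(\cdot)$ for $p$-adic valuation, the hypothesis says $v_p(s)$ is even and $v_p(t) \in \{0,1\}$ for every prime $p$. Since $\lambda$ is multiplicative we have
$$\lambda(n) = \prod_p \lambda(p^{v_p(s)+v_p(t)}), \qquad \lambda(s)\lambda(t) = \prod_p \lambda(p^{v_p(s)})\lambda(p^{v_p(t)}),$$
so it suffices to prove that $\lambda(p^{a+b}) \leq \lambda(p^a)\lambda(p^b)$ in the two cases $(a,b)=(2k,0)$ and $(a,b)=(2k,1)$.

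The first case is an equality since $\lambda(1)=1$. The interesting case is $\lambda(p^{2k+1}) \leq \lambda(p^{2k})\lambda(p)$. Here I would simply plug in the three possibilities for $\chi(p)$ using the standard evaluation $\lambda(p^j) = 1+\chi(p)+\cdots+\chi(p)^j$. If $\chi(p) = 1$, the inequality becomes $2k+2 \leq 2(2k+1)$, valid for $k \geq 0$. If $\chi(p) = -1$, then $\lambda(p^{2k+1}) = 0 = \lambda(p^{2k})\lambda(p)$, since $\lambda(p)=0$. If $\chi(p) = 0$, both sides equal $1$. So the local inequality holds in every case, and taking the product over $p$ yields the global inequality.

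There is no real obstacle here: the only mild subtlety is remembering that even though the factorization $n = st$ does not make $s$ and $t$ coprime, one can still write each side of the desired inequality as a product over primes using the multiplicativity of $\lambda$, and then the constraint that $v_p(s)$ is even and $v_p(t) \leq 1$ makes the prime-power check completely elementary.
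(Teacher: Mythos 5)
Your proof is correct and takes essentially the same approach as the paper: both reduce to the prime-power inequality $\lambda(p^{2k+1}) \leq \lambda(p^{2k})\lambda(p)$ via multiplicativity, then verify it by casework on $\chi(p) \in \{-1,0,1\}$ using $\lambda(p^j) = 1 + \chi(p) + \cdots + \chi(p)^j$.
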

\begin{proof}
We first recall that if $\chi$ is an exceptional character then, for any prime $p$, we have $\chi(p)=\pm 1$ or 0.  If $\chi(p)=-1$ then $\lambda(p^{2k})=1$ and $\lambda(p)=0$, and hence
$$\lambda(p^{2k+1})=0=\lambda(p)\lambda(p^{2k}).$$
If $\chi(p)=0$ then $\lambda(p^{r})=1$ for every $r\geq 1$, and hence
$$\lambda(p^{2k+1})=1=\lambda(p)\lambda(p^{2k}).$$
If $\chi(p)=1$, we have
$$\lambda(p^{2k+1})=\tau(p^{2k+1})=2k+2\leq (2k+1)(2)=\tau(p^{2k})\tau(p)=\lambda
(p^{2k})\lambda(p).$$
So for a given natural number $n$, write
$$n=p_1^{2k_1+j_1}\cdots p_t^{2k_t+j_t}$$
where $k_i$ is a whole number and $j_i$ is 0 or 1. Since $\lambda$ is multiplicative, we then have
\begin{align*}
\lambda(n)=&\lambda(p_1^{2k_1+j_1}\cdots p_t^{2k_t+j_t})=\lambda(p_1^{2k_1+j_1})\cdots \lambda(p_t^{2k_t+j_t})\\
\leq &\lambda(p_1^{2k_1})\lambda(p_1^{j_1})\cdots \lambda(p_t^{2k_t})\lambda(p_t^{j_t})=\lambda(p_1^{2k_1}\cdots p_t^{2k_t})\lambda(p_1^{j_1}\cdots p_t^{j_t})=\lambda(s)\lambda(t).
\end{align*}
\end{proof}
We also note for later that
$$\sum_{n\leq x}\lambda(n)=xL(1,\chi)+O\left(D\sqrt x\right),$$
and
$$\sum_{D^2\leq n\leq x}\frac{\lambda(n)}{n}=L(1,\chi)\log x,$$
as these are Lemma 5.1 and (5.9) of \cite{FI03}, respectively.

\section{A rough $\lambda'$ function}\label{lambdaR}

Recall the definitions of $\lambda'_R$ and $\lambda_R$ given in Section \ref{Deflambda}.  Analogously to the definitions of $S'(x,q)$ and $S'(x,q,a)$, we define
\begin{gather*}
S_R'(x,q)=\sum_{\substack{n\leq x \\(n,q)=1}}\lambda_R'(n),\\
S'_R(x,q,a)=\sum_{\substack{n\leq x \\n\equiv a \pmod q}}\lambda'_R(n).
\end{gather*}
We prove that something like Lemma \ref{lambda'ap} holds for these variants as well.
\begin{lemma}\label{DR}
For any $\alpha>0$,
$$S'_R(x,q,a)=\frac{1-\chi_D\left(\frac{aD}{(D,q)}\right)}{\phi(q)}S_R'(x,q)+O\left((Dq)^{\frac 12+\alpha}+\frac{x\log^5 x\mathcal L(\chi)}{\phi(q)}\right).$$
\end{lemma}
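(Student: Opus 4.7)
The plan is to reduce the statement to Lemma \ref{lambda'ap} by showing that the ``small-prime-factor'' contribution to $S'$ is nearly equidistributed in arithmetic progressions modulo $q$. Write $\lambda'(n) = \lambda'_R(n) + \lambda'(n)\textbf{1}_{P(n)\le R}$ and sum over $n \le x$ with $n \equiv a \pmod q$. Applying Lemma \ref{lambda'ap} to $S'(x,q,a)$, and using the analogous decomposition $S'_R(x,q) = S'(x,q) - T(x,q)$ for the coprime version, the statement becomes equivalent to bounding
\[
\left|T(x,q,a) - \frac{1-\chi_D(aD/(D,q))}{\phi(q)}\,T(x,q)\right|
\]
by $O(x\mathcal L(\chi)\log^5 x/\phi(q))$, where $T(x,q,a) := \sum_{\substack{n \le x,\, n\equiv a\,(q) \\ P(n)\le R}} \lambda'(n)$.

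To handle this discrepancy I would decompose each $n$ with $P(n)\le R$ as $n = dm$, where $d$ is the $R$-smooth part and $m$ the $R$-rough part, so that $(d,m)=1$ and $d>1$. Lemma \ref{split} then gives $\lambda'(dm) = \lambda'(d)\lambda(m) + \lambda(d)\lambda'(m)$. Because $R > \sqrt{x}$ for large $x$, any $R$-rough $m\le x$ is either $1$ or a prime in $(R,x]$, and in the prime case $d \le x/R \le \exp((\log x)^{1/2})$ is sub-polynomial in $x$. So for each fixed $d$ the inner sum over $m$ either collapses to a single term or becomes a short-interval sum over primes in an arithmetic progression. Extending from primes to all integers and applying Lemmas \ref{lambda'ap} and \ref{lambdaap} to the resulting complete sums $S'$ and $S$, each inner sum is evaluated up to an $O((Dq)^{1/2+\alpha})$ error. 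The character factors $1 \pm \chi_D(a\bar d\,D/(D,q))$ that arise, after using that $\chi_D$ is real so $\chi_D(\bar d) = \chi_D(d)$, combine against the outer weights $\lambda'(d), \lambda(d)$ to produce a main term matching $\frac{1-\chi_D(aD/(D,q))}{\phi(q)}T(x,q)$, up to a residual of the shape $\sum_d (\cdots)(x/d)L(1,\chi)/\phi(q)$. This residual is then bounded using the identities $\sum_{n\le T}\lambda(n) = TL(1,\chi) + O(D\sqrt T)$ and $\sum_{D^2 \le n \le T}\lambda(n)/n = L(1,\chi)\log T$ quoted at the end of Section \ref{background2}, which inject the factor $\mathcal L(\chi)$. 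The per-$d$ errors $(Dq)^{1/2+\alpha}$, summed over the sub-polynomial range of $d$, stay within the first error term after slightly enlarging $\alpha$.

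The main obstacle, I expect, will be handling the short range $d \le D^2$ on which the identity (5.9) of \cite{FI03} is inapplicable. On that range I would use the trivial bound $|\lambda(d)| \le \tau(d)$ together with $\sum_{d \le T}\tau(d)/d \ll (\log T)^2$, and leverage the non-negativity of $\lambda$ so that the resulting extra polylog factor is absorbed into the $\log^5 x$ budget. A secondary difficulty is tracking coprimality after extending the inner sums from primes to all integers: the initial $R$-smooth/$R$-rough decomposition gives $(d,m)=1$ for free, but the non-$R$-rough complement must be re-decomposed via the same scheme, and one must verify that this recursion terminates without unbounded polylogarithmic losses.
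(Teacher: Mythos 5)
Your proposal replaces the paper's Möbius-sieve expansion with a unique $R$-smooth $\times$ $R$-rough factorization $n=dm$, and this substitution is where the argument breaks. The paper writes
\[
S_R'(x,q,a)=\sum_{\substack{n\le x\\ n\equiv a\,(q)}}\lambda'(n)\sum_{\substack{d\mid n\\ d\mid P(R)}}\mu(d)
=\sum_{d\mid P(R)}\mu(d)\!\!\sum_{\substack{m\le x/d\\ dm\equiv a\,(q)}}\!\!\lambda'(dm),
\]
so that the inner $m$-sum is \emph{unrestricted}, the $d$-range can be truncated to $d\le x^\eta$ (with the $d>x^\eta$ tail killed by Shiu plus a Dickman-type smooth-number count), and Lemmas \ref{lambda'ap}/\ref{lambdaap} apply directly to the complete inner sums after a further $m=jk$ split to restore coprimality. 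Your factorization $n=dm$ with $d$ the $R$-smooth part does not produce a small $d$ and an unrestricted $m$: it produces either $m=1$, in which case $d$ ranges over \emph{all} $R$-smooth integers up to $x$, or $m$ a prime, in which case the inner sum is over primes in an arithmetic progression.

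Both branches are left unresolved. Since $R>xe^{-\sqrt{\log x}}$, almost every $n\le x$ is $R$-smooth, so the $m=1$ branch, namely $\sum_{d\le x,\,d\ R\text{-smooth},\,d>1,\,d\equiv a\,(q)}\lambda'(d)$, carries essentially all of $T(x,q,a)$; Lemmas \ref{lambda'ap} and \ref{lambdaap} are statements about $\lambda'$ and $\lambda$ summed over \emph{all} $n$ in a progression, not over $R$-smooth $n$, and you give no mechanism to pass from one to the other. In the $m$-prime branch the inner sums are $\theta$- and $\pi$-type sums in progressions to modulus $q>\sqrt x$ — the very quantities the whole section is trying to control — and ``extending from primes to all integers'' is not an identity: the composites you add back constitute an error of exactly the same shape as the original discrepancy $T$. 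You flag this yourself as the concern that the ``recursion terminates,'' but nothing in the proposal shows that it does, and I do not see how it could without something playing the role of the truncation $d\le x^\eta$ that the Möbius expansion provides. The secondary issues you raise (the $d\le D^2$ range for (5.9) of \cite{FI03}, and coprimality bookkeeping) are genuine but minor by comparison; the central gap is that the smooth/rough factorization never isolates a short auxiliary variable, so the reduction to Lemmas \ref{lambda'ap}/\ref{lambdaap} is not achieved.
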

\begin{proof}
Define
$$P(R)=\prod_{p\leq R}p.$$
Then
$$S_R'(x,q,a)=\sum_{\substack{n\leq x \\n\equiv a \pmod q \\ (n,P(R))=1}}\lambda'(n)w(n)=\sum_{\substack{n\leq x \\n\equiv a \pmod q}}\lambda'(n)\sum_{\substack{d|n \\ d|P(R)}}\mu(d)=\mathop{\sum\sum}\limits_{\substack{dm\leq x \\ dm\equiv a\pmod q \\ d|P(R)}}\mu(d)\lambda'(dm).$$
We split the sum over $d$ into ranges.  Choose some small $\eta>0$.  Then
$$S_R'(x,q,a)=\mathop{\sum\sum}\limits_{\substack{dm\leq x,d\leq x^\eta \\ dm\equiv a\pmod q \\ d|P(R)}}\mu(d)\lambda'(dm)+\mathop{\sum\sum}\limits_{\substack{dm\leq x,d>x^\eta \\ dm\equiv a\pmod q \\ d|P(R)}}\mu(d)\lambda'(dm).$$
For the second sum, we note that since $d$ is $R$-smooth, if $d>x^\eta$ then $d$ must have a (not necessarily unique) divisor $l|d$ such that $x^\frac \eta 2\leq l\leq Rx^\frac \eta 2$.  Letting $d=lj$,
\begin{align*}
\mathop{\sum\sum}\limits_{\substack{dm\leq x,d>x^\eta \\ dm\equiv a\pmod q \\ d|P(R)}}&\mu(d)\lambda'(dm)\\
\ll &\log x\sum_{\substack{x^\frac \eta 2\leq l\leq Rx^\frac \eta 2 \\ l|P(R)}}\tau(l)\mathop{\sum\sum}\limits_{\substack{jm\leq \frac xl \\ jm\equiv a\bar l\pmod q }}\tau(jm)\\
\ll & \log x\sum_{l_1\leq x^{\frac \eta 4}}\sum_{\substack{\frac{x^\frac \eta 2}{l_1}\leq l_2\leq \frac{Rx^\frac \eta 2}{l_1} \\ l_2|P(R)}}\mathop{\sum}\limits_{\substack{k\leq \frac x{l_1l_2} \\ k\equiv a\bar l_1\bar l_2\pmod q }}\tau_4(k),
\end{align*}
where $k=jm$ and $l=l_1l_2$.

By Shiu's theorem, the inner sum can be bounded by
$$\mathop{\sum}\limits_{\substack{k\leq \frac x{l_1l_2} \\ k\equiv a\bar l_1\bar l_2\pmod q }}\tau_4(k)\ll \frac{x\log^3 x}{l_1l_2}.$$
For the sum over $l_2$, we can split this sum into dyadic intervals, finding
$$\sum_{\substack{\frac{x^\frac \eta 2}{l_1}\leq l_2\leq \frac{Rx^\frac \eta 2}{l_1} \\ l_2|P(R)}}\frac{1}{l_2}\ll \sum_{r=0}^{\log_2R}\sum_{\substack{2^r\frac{x^\frac \eta 2}{l_1}\leq l_2\leq 2^{r+1}\frac{x^\frac \eta 2}{l_1} \\ l_2|P(R)}}\frac{1}{l_2}\ll \sum_{r=0}^{\log_2R}\frac{l_1}{2^rx^\frac \eta 2}\sum_{\substack{2^r\frac{x^\frac \eta 2}{l_1}\leq l_2\leq 2^{r+1}\frac{x^\frac \eta 2}{l_1} \\ l_2|P(R)}}1. $$
By the standard estimate for smooth numbers \cite{Di}, we know that for
$$u=\frac{\log x}{\log y},$$
the number of $y$-smooth numbers less than $x$ is $\ll x\rho(u)\ll xu^{-u}$, where $\rho$ is the Dickman-de Bruijn function. So this sum over $l_2$ and $r$ is
$$\ll \sum_{r=0}^{\log_2R}e^{-\frac{\eta\log x}{4\log R}}\ll e^{-\frac{\eta\log x}{4\log R}}\log R. $$
So
$$\log x\sum_{l_1\leq x^{\frac \eta 4}}\sum_{\substack{\frac{x^\frac \eta 2}{l_1}\leq l_2\leq \frac{Rx^\frac \eta 2}{l_1} \\ l_2|P(R)}}\mathop{\sum}\limits_{\substack{k\leq \frac x{l_1}{l_2} \\ k\equiv a\bar l_1\bar l_2\pmod q }}\tau_4(k)\ll \frac{x\log^4 x}{q}\sum_{l_1\leq x^{\frac \eta 4}}\frac{1}{l_1}\sum_{\substack{\frac{x^\frac \eta 2}{l_1}\leq l_2\leq \frac{Rx^\frac \eta 2}{l_1} \\ l_2|P(R)}}\frac{1}{l_2}\ll \frac xqe^{-\frac{\eta\log x}{5\log R}}.$$
Hence,
$$S_R'(x,q,a)=\mathop{\sum\sum}\limits_{\substack{dm\leq x,d\leq x^\eta \\ dm\equiv a\pmod q \\ d|P(R)}}\mu(d)\lambda'(dm)+O\left(\frac{x}{q}e^{-\frac{\eta\log x}{5\log R}}\right).$$
For the remaining sum, since $\lambda'$ has the decomposition given in Lemma \ref{split}, we write $m=jk$, where $(k,d)=1$ and $rad(j)|d$.  Then
$$\mathop{\sum\sum}\limits_{\substack{dm\leq x,d\leq x^\eta \\ dm\equiv a\pmod q \\ d|P(R)}}\mu(d)\lambda'(dm)=\sum_{\substack{d\leq x^\eta \\ d|P(R)}}\sum_{\substack{j\leq \frac xd \\ rad(j)|d}}\mathop{\sum}\limits_{\substack{k\leq \frac{x}{dj},\\ dkj\equiv a\pmod q }}[\mu(d)\lambda'(dj)\lambda(k)+\mu(d)\lambda(dj)\lambda'(k)].$$
We can again split the sum over $j$ into $j\leq x^\eta$ and $j>x^\eta$.  Note that for the sum over $j>x^\eta$, we have
$$\log x \sum_{\substack{x^\eta<j\leq \frac xd \\ rad(j)|d|P(R) }}\mathop{\sum}\limits_{\substack{m\leq \frac{x}{dj},\\ dkj\equiv a\pmod q }}\tau(djk)\ll \frac x{dq}e^{-\frac{\eta\log x}{5\log R}}$$
by the same reasoning as before.  So
\begin{align*}\mathop{\sum\sum}\limits_{\substack{dm\leq x,d\leq x^\eta \\ dm\equiv a\pmod q \\ d|P(R)}}&\mu(d)\lambda'(dm)\\
=&\sum_{\substack{d\leq x^\eta \\ d|P(R)}}\sum_{\substack{j\leq x^\eta \\ rad(j)|d}}\mathop{\sum}\limits_{\substack{m\leq \frac{x}{dj},\\ dkj\equiv a\pmod q }}\left[\mu(d)\lambda(dj)\lambda'(k)+\mu(d)\lambda'(dj)\lambda(k)\right]+O\left(\frac x{q}e^{-\frac{\eta\log x}{6\log R}}\right).
\end{align*}
Noting that $\frac{x}{dj}\geq x^{1-2\eta}$ for some small choice of $\eta$, we can apply Lemmas \ref{lambda'ap} and \ref{lambdaap} to find for any $\varepsilon>0$,
\begin{align*}
\sum_{\substack{d\leq x^\eta \\ d|P(R)}}&\sum_{\substack{j\leq x^\eta \\ rad(j)|d}}\mathop{\sum}\limits_{\substack{m\leq \frac{x}{dj},\\ dkj\equiv a\pmod q }}\left[\mu(d)\lambda(dj)\lambda'(k)+\mu(d)\lambda'(dj)\lambda(k)\right]\\
=&\frac{1-\chi_D\left(\frac{aD}{(D,q)}\right)}{\phi(q)}\sum_{\substack{d\leq x^\eta \\ d|P(R)\\ (d,q)=1}}\sum_{\substack{j\leq x^\eta \\ rad(j)|d}}\mathop{\sum}\limits_{\substack{k\leq \frac{x}{dj},\\ (k,q)=1 }}\mu(d)\lambda(dj)\lambda'(k)\\
&+\frac{1+\chi_D\left(\frac{aD}{(D,q)}\right)}{\phi(q)}\sum_{\substack{d\leq x^\eta \\ d|P(R)\\ (d,q)=1}}\sum_{\substack{j\leq x^\eta \\ rad(j)|d}}\mathop{\sum}\limits_{\substack{k\leq \frac{x}{dj},\\ (k,q)=1 }}\mu(d)\lambda'(dj)\lambda(k)+O\left(q^{\frac 12+\varepsilon}x^{2\eta}+\frac xqe^{-\frac{\eta\log x}{6\log R}}\right).
\end{align*}
For the latter expression
\begin{align*}
\frac{1+\chi_D\left(\frac{aD}{(D,q)}\right)}{\phi(q)}\sum_{\substack{d\leq x^\eta \\ d|P(R)\\ (d,q)=1}}\sum_{\substack{j\leq x^\eta \\ rad(j)|d}}\mathop{\sum}\limits_{\substack{k\leq \frac{x}{dj},\\ (k,q)=1 }}\lambda'(dj)\lambda(k)\ll &\frac{x\log xL(1,\chi)}{\phi(q)}\sum_{\substack{d\leq x^\eta}}\sum_{\substack{j\leq x^\eta}}\frac{\tau(dj)}{dj}\\
\ll &\frac{x\log xL(1,\chi)}{\phi(q)}\sum_{\substack{r\leq x^{2\eta}}}\frac{\tau_4(r)}{r}\\
\ll &\frac{x\log^5 xL(1,\chi)}{\phi(q)}.
\end{align*}
So
\begin{align*}
\sum_{\substack{d\leq x^\eta \\ d|P(R)}}&\sum_{\substack{j\leq x^\eta \\ rad(j)|d}}\mathop{\sum}\limits_{\substack{m\leq \frac{x}{dj},\\ dkj\equiv a\pmod q }}\left[\mu(d)\lambda(dj)\lambda'(k)+\mu(d)\lambda'(dj)\lambda(k)\right]\\
=&\frac{1-\chi_D\left(\frac{aD}{(D,q)}\right)}{\phi(q)}\sum_{\substack{d\leq x^\eta \\ d|P(R)\\ (d,q)=1}}\sum_{\substack{j\leq x^\eta \\ rad(j)|d}}\mathop{\sum}\limits_{\substack{k\leq \frac{x}{dj},\\ (k,q)=1 }}\mu(d)\lambda(dj)\lambda'(k)+O\left(q^{\frac 12+\varepsilon}x^{2\eta}+\frac xqe^{-\frac{\eta\log x}{6\log R}}+\frac{x\log^5 xL(1,\chi)}{\phi(q)}\right).
\end{align*}
By essentially the same reasoning,
$$S_R'(x,q)=\sum_{\substack{d\leq x^\eta \\ d|P(R)\\ (d,q)=1}}\sum_{\substack{j\leq x^\eta \\ rad(j)|d}}\mathop{\sum}\limits_{\substack{k\leq \frac{x}{dj},\\ (k,q)=1 }}\mu(d)\lambda'(dj)\lambda'(k)+O\left(xe^{-\frac{\eta\log x}{6\log R}}+x\log^5 xL(1,\chi)\right).$$
Taking $\eta=\frac 14\varepsilon$ and $\varepsilon=\frac{\alpha}{2}$, the lemma then follows.
\end{proof}
\section{A square-free $\lambda'$ function}\label{lambdaW}
Recall now the definitions of $\lambda'_W$ and $\lambda_W$ from Section \ref{Deflambda}.  We next show that the difference between $\lambda'_R$ and $\lambda'_W$ is minimal.
\begin{lemma}\label{DW}
Let $\sqrt x\leq q<D^{-1}x^{\frac 23-3\alpha}$.  Then for any $\alpha>0$,
$$S_W'(x,q,a)=S_R'(x,q,a)+O\left(q^{\frac 12+\alpha}+\frac{x\log x}{qR^{1-2\alpha}}\right),$$
and
$$S_W'(x,q)=S_R'(x,q)+O\left(\frac{x}{R^{1-2\alpha}}\right).$$
Hence,
$$S_W'(x,q,a)=\frac{1-\chi_D\left(\frac{aD}{(D,q)}\right)}{\phi(q)}S_W'(x,q)+O\left((Dq)^{\frac 12+\alpha}+\frac xqe^{-\frac{\varepsilon\log x}{24\log R}}+\frac{x\log^5 xL(1,\chi)}{\phi(q)}\right).$$

\end{lemma}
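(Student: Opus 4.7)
The plan is to bound the difference $S_R'-S_W'$ directly and then substitute into Lemma \ref{DR} to produce the asymptotic for $S_W'$. Since $\lambda_W'(n)=\lambda_R'(n)\mu(n)^2$, the difference $\lambda_R'-\lambda_W'$ is supported on non-squarefree $R$-rough integers, i.e.\ those $n$ with $p^2\mid n$ for some prime $p>R$. A union bound over such primes, combined with the crude estimates $|\lambda'(n)|\ll\tau(n)\log x$ and $\tau(p^2m)\le 3\tau(m)$, yields
\[
|S_R'(x,q,a)-S_W'(x,q,a)|\ll\log x\sum_{\substack{p>R\\(p,q)=1}}\sum_{\substack{m\le x/p^2\\ m\equiv a\bar p^2\pmod q}}\tau(m),
\]
with the analogous inequality for $|S_R'(x,q)-S_W'(x,q)|$ obtained by dropping the progression.

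I would split the $p$-sum into two ranges. For $R<p\le (x/q^{1+4\alpha})^{1/2}$ one has $x/p^2\ge q^{1+4\alpha}$, so Shiu's theorem bounds the inner sum by $O\!\bigl((x/p^2)\log x/\phi(q)\bigr)$; summing $\sum_{p>R}p^{-2}\ll R^{-1}$ gives a contribution $\ll x\log^2 x/(\phi(q)R)$, comfortably absorbed into $O\!\bigl(x\log x/(qR^{1-2\alpha})\bigr)$ once the extra logarithm is dominated by $R^{2\alpha}$. The coprime version is identical and gives $O(x/R^{1-2\alpha})$. For $p>(x/q^{1+4\alpha})^{1/2}$ the inner sum has $O(q^{4\alpha})$ admissible $m$'s with $\tau(m)\ll q^\alpha$; counting the primes $p\le\sqrt x$ via the quadratic congruence $p^2\equiv a\bar m\pmod q$ --- at most $\tau(q)\ll q^\alpha$ residues modulo $q$, each containing $O(\sqrt x/q+1)=O(1)$ primes since $\sqrt x\le q$ --- and switching the order of summation to exploit that the total number of pairs $(p,m)$ is bounded by the count of $n=p^2m\le x$ with $n\equiv a\pmod q$, one extracts a contribution of $O(q^{1/2+\alpha})$.

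For the ``hence'' part I would chain the two difference bounds with Lemma \ref{DR}:
\begin{align*}
S_W'(x,q,a)
&=S_R'(x,q,a)+O\!\left(q^{\tfrac12+\alpha}+\tfrac{x\log x}{qR^{1-2\alpha}}\right)\\
&=\tfrac{1-\chi_D(aD/(D,q))}{\phi(q)}S_R'(x,q)+O\!\left((Dq)^{\tfrac12+\alpha}+\tfrac{x\log^5 x\,\mathcal L(\chi)}{\phi(q)}+\tfrac{x}{q}e^{-\tfrac{\eta\log x}{6\log R}}\right)\\
&=\tfrac{1-\chi_D(aD/(D,q))}{\phi(q)}\bigl(S_W'(x,q)+O(x/R^{1-2\alpha})\bigr)+\cdots,
\end{align*}
noting that the prefactor $1/\phi(q)$ converts $x/R^{1-2\alpha}$ into $(x/q)R^{-1+2\alpha+o(1)}$, which is dominated by the exponential error, and that choosing $\eta=\varepsilon/4$ produces the constant $1/24$ in the final exponent; the remaining errors assemble into the stated bound. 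The principal technical obstacle is the large-$p$ range in the previous paragraph, where Shiu's theorem is unavailable: the clean $O(q^{1/2+\alpha})$ bound rests on combining the scarcity of primes in arithmetic progressions modulo $q>\sqrt x$ with the $O(q^\alpha)$ bound on the number of square roots of a fixed residue modulo $q$, and it is this estimate that ties the range of $q$ in Lemma \ref{DW} to that of Lemma \ref{lambdaap} and ultimately to the cutoff $q<D^{-1}x^{2/3-3\alpha}$.
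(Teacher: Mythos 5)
Your overall structure matches the paper's: bound $|S_R'-S_W'|$ (which is supported on $R$-rough, non-squarefree integers), split the square part into a ``small'' range handled by Shiu and a ``large'' range handled by counting modulo $q$, then chain the result through Lemma~\ref{DR}. Your Shiu range and the final chaining (including the choice $\eta=\varepsilon/4$ giving the $1/24$) are fine and track the paper. Using a union bound over primes $p>R$ with $p^2\mid n$ rather than the paper's exact decomposition $n=st$ with $s$ the square part is a harmless cosmetic change.

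The gap is in the large-$p$ range. You merge what the paper treats as two separate ranges --- a \emph{middle} range $x/q^{1+\alpha}<s<q^{1+\alpha}$ and a \emph{high} range $s>q^{1+\alpha}$ --- into the single range $p>(x/q^{1+4\alpha})^{1/2}$, and the bound $O(q^{1/2+\alpha})$ is asserted rather than derived. As written, neither ordering of summation delivers it. Fixing $p$ first: there are $\ll q^{4\alpha}+1$ admissible $m$'s per prime, but the number of primes $p$ in $((x/q^{1+4\alpha})^{1/2},\sqrt x]$ is $\asymp \sqrt x/\log x$, which can be as large as $q/\log q$; the product is $\gg q^{1+O(\alpha)}$. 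Fixing $m$ first: the square-root count gives $\ll q^{\alpha}$ primes per $m$, but $m$ ranges freely up to $q^{1+4\alpha}$, again giving $\gg q^{1+O(\alpha)}$. And ``the total number of pairs $(p,m)$ is bounded by the count of $n\le x$, $n\equiv a\pmod q$'' yields $\ll (x/q)\log x$, which for $q<x^{2/3}$ exceeds $q^{1/2+\alpha}$. None of these routes produces the stated bound. What is missing is the further split the paper makes: in the middle range one needs the \emph{upper} bound $s<q^{1+\alpha}$ (so $d<q^{(1+\alpha)/2}$, limiting the number of $d$'s to $\ll q^{(1+\alpha)/2}$, each with $\ll q^{\alpha}+1$ values of $v$, giving $\ll q^{1/2+O(\alpha)}$; the paper reaches the same conclusion via an exponential-sum/Gauss-sum calculation), while in the high range one uses that the cofactor $t$ is small ($t<x/q^{1+\alpha}$), so that the square-root-mod-$q$ argument multiplied by the $O(x/q^{1+\alpha})$ values of $t$ gives $\ll x^{1+o(1)}/q^{1+\alpha}$. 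Your sketch contains both ingredients (Shiu for many $m$, square-root counting for few) but without the intermediate cut at $s\asymp q^{1+\alpha}$ they do not combine into $O(q^{1/2+\alpha})$. To complete the proof you should reinstate that third range, as the paper does.
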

\begin{proof}
As in Lemma \ref{sqsplit}, we write $n=st$ where $s$ is a square and $t$ is square-free.  Note that $|S_W'(x,q,a)-S_R'(x,q,a)|$ will be a sum comprised of numbers $n$ that have a square factor of size at least $R^2$.  So for any $\alpha>0$,
\begin{align*}|S_W'(x,q,a)-S_R'(x,q,a)|\leq &\sum_{\substack{st\leq x \\ st\equiv a\pmod q \\ s>R^2}}\lambda'_R(st)\\
\ll &\log x\sum_{R^2<s\leq \frac{x}{q^{1+\alpha}}}\tau(s)\sum_{\substack{t\leq \frac xs \\ t\equiv a\bar s\pmod q}}\tau(t)+x^{o(1)}\sum_{\frac{x}{q^{1+\alpha}}<s<q^{1+\alpha}}\sum_{\substack{t\leq \frac xs \\ t\equiv a\bar s\pmod q}}1\\
&+x^{o(1)}\sum_{\substack{R<t<\frac{x}{q^{1+\alpha}}  }}\sum_{\substack{q^{1+\alpha}<s\leq \frac xt \\ \\ s\equiv a\bar t\pmod q}}1
\end{align*}
Write $s=d^2$.  For the first term, we can use Shiu's theorem, along with the fact that $\tau(s)\ll s^{\alpha}$ for any $\alpha>0$:
\begin{gather}\label{firstWterm}\log x\sum_{R^2<s\leq \frac{x}{q^{1+\alpha}}}\tau(s)\sum_{\substack{t\leq \frac xs \\ t\equiv a\bar s\pmod q}}\tau(t)\ll \frac{x\log x}{q}\sum_{R<d\leq \sqrt{\frac x{q^{1+\alpha}}}}\frac{d^{2\alpha}}{d^2}\ll \frac{x\log x}{qR^{1-2\alpha}}.
\end{gather}
For the second term, we express the congruence condition via exponential sums.  We change notation from $t$ to $v$ to indicate that we have dropped the condition on $t$ being square-free, finding:
\begin{align*}\sum_{\substack{\frac{x}{q^{1+\alpha}}<s<q^{1+\alpha} \\ s=d^2 \\ (s,q)=1}}\sum_{\substack{v\leq \frac xs \\ v\equiv a\bar s\pmod q}}1=&\frac 1q\sum_{r=0}^{q-1}\sum_{\substack{\sqrt{\frac x{q^{1+\alpha}}}<d<q^{\frac 12+\frac \alpha 2}\\ (d,q)=1}}\sum_{\substack{r\leq \frac x{d^2} }}\exp\left(\frac{r(d^2v-a)}{q}\right)\\
\ll &\sqrt{\frac{x}{q^{1-\alpha}}}+\frac 1q\sum_{r=1}^{q-1}\sum_{\sqrt{\frac x{q^{1+\alpha}}}<d<q^{\frac 12+\frac \alpha 2}}\frac{1}{||d^2r/q||},
\end{align*}
where the first term is the $r=0$ term, and $||\cdot ||$ indicates the distance to the closest integer.  We change variables, letting $r'=d^2r$ and finding
\begin{align*}
\frac 1q\sum_{r=1}^{q-1}\sum_{\sqrt{\frac x{q^{1+\alpha}}}<d<q^{\frac 12+\frac \alpha 2}}\frac{1}{||d^2r/q||}=&\frac 1q\sum_{\sqrt{\frac x{q^{1+\alpha}}}<d<q^{\frac 12+\frac \alpha 2}}\sum_{r'=1}^{q-1}\frac{1}{||r'/q||}\\
\ll &\log x\sum_{\sqrt{\frac x{q^{1+\alpha}}}<d<q^{\frac 12+\frac \alpha 2}}1\\
\ll &q^{\frac 12+\frac 34\alpha}.
\end{align*}
Since $q\geq\sqrt x$,
$$\sqrt{\frac{x}{q^{1-\alpha}}}\leq q^{\frac 12+\frac 34\alpha},$$
and hence the $1\leq r\leq q-1$-term dominates the $r=0$-term, meaning that
\begin{align*}x^{o(1)}\sum_{\substack{\frac{x}{q^{1+\alpha}}<s<q^{1+\alpha} \\ s=d^2 \\ (s,q)=1}}&\sum_{\substack{t\leq \frac xs \\ t\equiv a\bar s\pmod q}}1\ll q^{\frac 12+\alpha}.
\end{align*}
Finally, with the third expression, note that the number of solutions to $d^2\equiv b\pmod q$ is $\ll 2^{\omega(q)}=x^{o(1)}$ if $d<q$, where $\omega(q)$ denotes the number of unique prime divisors of $q$. So
 \begin{align*}
&x^{o(1)}\sum_{\substack{R<t<\frac{x}{q^{1+\alpha}}  }}\sum_{\substack{q^{\frac 12+\frac \alpha 2}<d\leq \sqrt{\frac xt} \\ d^2\equiv a\bar t\pmod q}}1\ll x^{o(1)}\sum_{\substack{R<t<\frac{x}{q^{1+\alpha}}  }}1\ll \frac{x^{1+o(1)}}{q^{1+\alpha}}.
\end{align*}
This term is dominated by the term in (\ref{firstWterm}).

Putting all three of these results together then proves the first equation in the lemma.


For the second equation in the lemma, we write again $d^2=s$ and split the sum over $s$ into $s\leq x^\frac 34$ and $s>x^\frac 34$, noting that $\lambda'_R(s)\leq \tau(s)\log s\ll s^\alpha$:
\begin{align*}\left|S_W'(x,q)-S_R'(x,q)\right|\ll &\log x\sum_{R<d\leq x^\frac 38}\tau\left(d^2\right)\sum_{t\leq \frac x{d^2} }\tau(t)+x^{o(1)}\sum_{t<x^\frac 14}\sum_{d\leq \sqrt{\frac xt} }1\\
\ll &x\sum_{R<s\leq x^\frac 38}\frac{1}{d^{2-2\alpha}}+x^{\frac 12+o(1)}\sum_{t<x^\frac 14}\sqrt{\frac 1t}\\
\ll &\frac{x}{R^{1-2\alpha}}+x^{\frac 58+o(1)}.
\end{align*}
The first summand is clearly larger than the second, completing the proof of the lemma.
\end{proof}

\section{Prime Support}\label{final}

Next, we will show that the behavior of $S_W'(x,q)$ is largely the same as that of $\psi(x)$, while $S'_W(x,q,a)$ provides roughly the upper bound that one would expect for $\psi(x,q,a)$.  This will allow us to prove Theorem \ref{Main1}.


\begin{lemma}\label{SWx}
$$S_W'(x,q)=x+O\left(R\log^2 x+x\log xL(1,\chi)\right).$$
\end{lemma}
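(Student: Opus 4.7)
The starting point will be the Dirichlet convolution identity $\lambda' = \lambda \ast \Lambda$, which follows by combining $\lambda = \chi \ast 1$ with the standard $\log = 1 \ast \Lambda$. Because every $n$ in the support of $\lambda'_W$ is square-free, the convolution collapses to a sum over prime divisors: for such $n$,
\[
\lambda'(n) = \sum_{p\mid n}(\log p)\,\lambda(n/p),
\]
since $\Lambda(n/d)\neq 0$ forces $n/d$ to be a prime factor of $n$. Substituting this into $S_W'(x,q)$ and writing $n = pm$ (with $p$ prime, $(p,m)=1$), I would swap summations to obtain
\[
S_W'(x,q) = \sum_{\substack{R<p\leq x\\ p\nmid q}}\log p \sum_{\substack{m\leq x/p,\,(m,pq)=1\\ m \text{ square-free},\, P(m)>R}}\lambda(m).
\]

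Next I would peel off the $m=1$ contribution as the main term: $\sum_{R<p\leq x,\,p\nmid q}\log p = x + O(R)$ by the prime number theorem with the Vinogradov--Korobov error (the choice $R\geq x e^{-\sqrt{\log x}}$ is precisely calibrated to dominate that error). For the remaining contribution $E$ of $m\geq 2$, the condition $P(m)>R$ forces $m>R$, so only $p<x/R$ contribute nontrivially. Using $\lambda\geq 0$ to drop the extra conditions in the inner sum, it is bounded above by $\sum_{m\leq x/p}\lambda(m) = (x/p)L(1,\chi) + O(D\sqrt{x/p})$, the formula stated at the end of Section~\ref{background2}. The $L(1,\chi)$ piece sums by a standard harmonic estimate over primes to $\ll x L(1,\chi)\log x$, which is within the target error.

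The main obstacle will be handling the $O(D\sqrt{x/p})$ remainder. Partial summation with $\theta(t)\ll t$ gives $\sum_{R<p<x/R}(\log p)/\sqrt p \ll \sqrt{x/R}$, so this term contributes $\ll Dx/\sqrt R$ to $E$, and I would need to verify $Dx/\sqrt R \ll R\log^2 x$. This is exactly where the definition $R = \max\{D^5,\, xe^{-\sqrt{\log x}}\}$ pays off, and I would check the two regimes separately: in the regime $R=D^5$, the inequality $D^5 \geq x e^{-\sqrt{\log x}}$ forces $D \geq x^{1/5-o(1)}$, whence $D^{13/2}\gg x$; in the regime $R = xe^{-\sqrt{\log x}}$, the required inequality reduces to $D \ll \sqrt x\, e^{-3\sqrt{\log x}/2}\log^2 x$, which holds because $\log D = o(\log x)$. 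Combining the main term with these two error estimates yields $S_W'(x,q) = x + O\bigl(R\log^2 x + x\log x\, L(1,\chi)\bigr)$, as claimed.
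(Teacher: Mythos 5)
Your proof is correct but takes a genuinely different route from the paper's. The paper factors each $n$ in the support of $\lambda'_W$ as $n=n_1 n_{-1}$ according to the sign of $\chi$ on the prime divisors, applies the derivative-type rule of Lemma~\ref{split}, and exploits the structural facts that $\lambda_W(n_{-1})=0$ unless $n_{-1}=1$ and that $\lambda'_W(n_{-1})=\Lambda(n_{-1})$; this yields the three-way split in display~(\ref{DWLambda}), whose middle piece delivers $x$ via the prime number theorem and whose outer pieces are both $O\left(x L(1,\chi)\log x\right)$. You instead observe $\lambda'=\lambda\ast\Lambda$, which on square-free $R$-rough $n$ collapses to $\lambda'(n)=\sum_{p\mid n}\lambda(n/p)\log p$; the $n/p=1$ term recovers $\theta(x)-\theta(R)=x+O(R)$, and the $n/p>1$ contribution is controlled by the mean value $\sum_{m\leq y}\lambda(m)=yL(1,\chi)+O(D\sqrt y)$ together with a short partial summation. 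Your approach is arguably more direct for this single lemma, since it bypasses the $\chi$-sign case analysis and the observation that $\lambda_W$ vanishes on the $\chi=-1$ part; the paper's $n_1n_{-1}$ decomposition, however, earns its keep later, as (\ref{DWLambda}) is reused verbatim at the start of Section~\ref{final2} in the proof of Theorem~\ref{Main2}, so the paper pays once for a tool it uses twice. Your explicit two-case verification that $Dx/\sqrt R\ll R\log^2 x$ under the definition $R=\max\{D^5,xe^{-\sqrt{\log x}}\}$ is correct and is a secondary-error comparison the paper avoids by routing all its remainders through sums of $\lambda(n)$.
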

\begin{proof}
For a given $n$, write $n=n_1n_{-1}$, partitioned such that $p|n_j$ implies $\chi(p)=j$ for $\chi=\chi_D$.  (If no $p|n$ is such that $\chi(p)=j$ then we write $n_j=1$.)  Note that any prime $p|n$ must be such that $\chi(p)\neq 0$, since $n$ is $R$-rough and $D<R$.  So by Lemma \ref{split},
$$S_W'(x,q)=\sum_{n_1n_{-1}\leq x}\lambda'_W\left(n_1n_{-1}\right)=\sum_{n_1n_{-1}\leq x}\left[\lambda'_W\left(n_1\right)\lambda_W\left(n_{-1}\right)+\lambda_W\left(n_1\right) \lambda'_W\left(n_{-1}\right)\right].$$
Note that if $n_{-1}\neq 1$ then $\lambda_W\left(n_{-1}\right)=0$, since either $n_{-1}$ is square-free (and hence $\lambda\left(n_{-1}\right)=0$) or else $\mu(n_{-1})^2=0$.  Moreover, $\lambda'_W\left(n_{-1}\right)$ is exactly the same as $\Lambda\left(n_{-1}\right)$, since $\chi(d)=\mu(d)$ for any $d|n_{-1}$. So we break the sum into three cases: $n_{-1}=1$, $n_1=1$, and the rest.
\begin{gather}\label{DWLambda}
S_W'(x,q)=\sum_{R<n_1\leq x}\lambda'_W\left(n_1\right)+\sum_{\substack{R<p_{-1}\leq x }}\log\left(p_{-1}\right)+\sum_{\substack{n_1n_{-1}\leq x \\ n_1,n_{-1}>R }}\lambda_W\left(n_1\right) \lambda_W'\left(n_{-1}\right).
\end{gather}
For the first term, since $\lambda'_W\left(n_{1}\right)\leq \tau\left(n_1\right)\log x=\lambda\left(n_1\right)\log x$ for every $n_1$,
$$\sum_{R<n_1\leq x}\lambda'_W\left(n_1\right)\ll \log x\sum_{R<n_1\leq x}\lambda\left(n_1\right)\ll x\log xL(1,\chi).$$
For the middle term, we can apply the prime number theorem:
$$\sum_{\substack{R<p_{-1}\leq x }}\log\left(p_{-1}\right)=x-\sum_{\substack{R<m\leq x \\ p|m\Rightarrow \chi(p)=1}}\Lambda(m)+O\left(xe^{-(\log x)^{\frac 35-\varepsilon}}+x^\frac 12+\sum_{\substack{p^k\leq x\\ p|q\mbox{ or }p<R }}\Lambda(p)\right).$$
For the sum over $m$, we again have
$$\sum_{\substack{R<m\leq x \\ p|m\Rightarrow \chi(p)=1,p>R}}\Lambda(m)\leq \log x\sum_{R<m\leq x}\lambda(m)\ll x\log xL(1,\chi).$$
For the sum over $p|q$, we see that $k\leq \log x$ and the number of $p|q$ is bounded by $\log x$, and hence
$$\sum_{\substack{p^k\leq x \\ p|q,p>R}}\Lambda(p)\ll \log^3 x.$$
For the sum over $p<R$, we have
$$\sum_{\substack{p^k\leq x\\ p<R }}\Lambda(p)\ll R\log^2 x.$$
Since $R>xe^{-(\log x)^{\frac 35-\varepsilon}}$ by assumption, we then have
$$\sum_{\substack{n_{-1}\leq x \\ p|n_{-1}\Rightarrow p>R}}\Lambda\left(n_{-1}\right)=x+O\left(R\log^2 x+x\log xL(1,\chi)\right).$$
Finally, for the last term of (\ref{DWLambda}),
\begin{align*}
\sum_{\substack{n_1n_{-1}\leq x \\ n_1,n_{-1}>R}}\lambda_W\left(n_1\right) \Lambda\left(n_{-1}\right)
&\leq \sum_{R<n_{-1}<2\sqrt{x}}\Lambda\left(n_{-1}\right)\sum_{R<n_1\leq \frac x{n_{-1}} }\lambda\left(n_1\right) +\sum_{R<n_1\leq 2\sqrt x }\lambda\left(n_1\right)\sum_{R<n_{-1}<\frac x{n_{1}}}\Lambda\left(n_{-1}\right) \\
&\ll xL(1,\chi)\sum_{R<n_{-1}<\frac xR}\frac{\Lambda\left(n_{-1}\right)}{n_{-1}}+x\sum_{R<n_1\leq 2\sqrt x }\frac{\lambda\left(n_1\right)}{n_1}\\
&\ll xL(1,\chi)\log x.
\end{align*}
\end{proof}

\begin{lemma}\label{psibd}
$$\psi(x,q,a)\leq S_W'(x,q,a)+O\left(x^\frac 14\right).$$
\end{lemma}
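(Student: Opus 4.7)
The plan is to compare $\Lambda(n)$ and $\lambda'_W(n)$ pointwise on the residue class $n\equiv a\pmod q$, showing that the only terms where $\Lambda(n)>\lambda'_W(n)$ are small primes and higher prime powers, whose total contribution is negligible.

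First, I would record the pointwise non-negativity $\lambda'_W(n)\geq 0$. Since $\log=1\ast\Lambda$, one has the convolution identity
$$\lambda' \;=\; \chi\ast\log \;=\; (\chi\ast 1)\ast \Lambda \;=\; \lambda\ast\Lambda,$$
so $\lambda'(n)=\sum_{e\mid n}\lambda(n/e)\Lambda(e)$. Because $\chi$ is a real character, $\lambda(p^k)\in\{0,1,k+1\}$ on prime powers and therefore $\lambda\geq 0$ on all $n$ by multiplicativity. Hence $\lambda'(n)\geq 0$, and the truncations $\mathbf{1}_{P(n)>R}$ and $\mu(n)^2$ preserve this.

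Next, I would evaluate $\lambda'_W$ at prime powers, which is the only place $\Lambda$ is supported. For a prime $p$, $\lambda'(p)=\chi(1)\log p + \chi(p)\log 1=\log p$, and $\mu(p)^2=1$, so
$$\lambda'_W(p)=\begin{cases}\log p & \text{if }p>R,\\ 0 & \text{if }p\leq R.\end{cases}$$
For $n=p^k$ with $k\geq 2$, $\mu(n)^2=0$ forces $\lambda'_W(n)=0$, while $\Lambda(n)=\log p$. Thus the pointwise defect $\Lambda(n)-\lambda'_W(n)$ is $0$ on primes larger than $R$, is $\log p$ on primes at most $R$ and on proper prime powers, and is $\leq 0$ on all other $n$ by Step 1.

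Finally, I would assemble the bound:
$$\psi(x,q,a)-S'_W(x,q,a) \;=\; \sum_{\substack{n\leq x\\ n\equiv a\pmod q}}\bigl(\Lambda(n)-\lambda'_W(n)\bigr) \;\leq\; \sum_{p\leq R}\log p \,+\, \sum_{\substack{p^k\leq x\\ k\geq 2}}\log p \;\ll\; R+\sqrt{x},$$
where I have dropped the congruence condition to get a clean upper bound in the last step. Since $R\geq xe^{-(\log x)^{1/2}}\gg\sqrt{x}$ by definition, this defect is $O(R\log^2 x)$ with plenty of room to spare, giving the claimed inequality.

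There is no real obstacle here: the only non-routine ingredient is the non-negativity of $\lambda'$ via the identity $\lambda'=\lambda\ast\Lambda$, and everything else reduces to identifying exactly which $n$ fall outside the support of $\lambda'_W$ and bounding them by Chebyshev-type estimates.
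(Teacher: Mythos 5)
Your proof is correct, and it takes a genuinely different — and cleaner — route than the paper's. The paper reuses the decomposition in (\ref{DWLambda}) (splitting $n=n_1n_{-1}$ by whether $\chi(p)=1$ or $\chi(p)=-1$ on prime factors), drops the nonnegative cross term, and then relates the remaining two pieces to $\psi(x,q,a)$, noting along the way that $\lambda'_W(n_1)\geq\Lambda(n_1)$ for $R$-rough $n_1$ and that $\lambda'_W(n_{-1})=\Lambda(n_{-1})$ on the relevant support. Your argument dispenses with that decomposition entirely: you establish $\lambda'\geq 0$ once and for all via $\lambda'=\lambda\ast\Lambda$ (with $\lambda\geq 0$ checked on prime powers), evaluate $\lambda'_W$ directly on prime powers to see that $\Lambda$ and $\lambda'_W$ agree exactly at primes $p>R$, and then bound the positive part of the defect $\Lambda-\lambda'_W$ by a Chebyshev sum over $p\leq R$ plus the contribution of proper prime powers. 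This is more transparent: in particular it handles $p^k$ with $k\geq 2$ explicitly (contributing $O(\sqrt x)$), whereas the paper's version silently relies on $R>\sqrt x$ to force any $R$-rough prime power below $x$ to be a bare prime; and it does not need the $n_1/n_{-1}$ bookkeeping, which in turn depends on $\chi(p)\neq 0$ for $p\mid n$. Your route even yields the slightly sharper error $O(R)$ rather than $O(R\log^2 x)$, though the paper's looser bound is harmless since it merely matches the error already present in Lemma \ref{SWx}.
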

\begin{proof}
Following the decomposition in (\ref{DWLambda}), we write
\begin{align*}
S_W'(x,q,a)\geq &\sum_{\substack{R<n_1\leq x \\ n_1\equiv a\pmod q}}\lambda'_W\left(n_1\right)+\sum_{\substack{R<p_{-1}\leq x\\ p_{-1}\equiv a\pmod q}}\log \left(p_{-1}\right)\\
=&\sum_{\substack{R<n_1\leq x \\ p|n_1\Rightarrow p>R \\ n_1\equiv a\pmod q}}\left(\lambda'_W\left(n_1\right)-\Lambda\left(n_1\right)\right)+\sum_{\substack{n\leq x \\ n\equiv a\pmod q}}\Lambda\left(n\right)\\
&+O\left(\sum_{k=2}^{\log x}\sum_{\substack{p\leq x^\frac 1k \\ p^k\equiv a\pmod q}}\log p\right)\\
\geq &\sum_{\substack{n\leq x \\ n\equiv a\pmod q}}\Lambda\left(n\right)+O\left(x^\frac 14+\sum_{\substack{p\leq \sqrt x \\ p^2\equiv a\pmod q}}\log p+\sum_{\substack{p\leq \sqrt x \\ p^3\equiv a\pmod q}}\log p\right),
\end{align*}
since $\lambda'_W(n_1)-\Lambda(n_1)\geq 0$ when $n_1$ is $R$-rough.  

By Chinese Remainder Theorem, the number of $k$-th power residues modulo $q$ is $$\frac{\phi(q)}{k^{\omega(q)C_k}}$$ where $C_k$ is bounded and depends on whether small powers of $k$ divide $q$, and hence for a given $a$, the number of possible $p$ for which $p^k\equiv a$ and $p<q$ is bounded by $$k^{\omega(q)C_k}< q^{\frac{C'_k}{\log\log q}}=x^{o(1)}.$$  Hence the last sum in the big-O term can be absorbed into $x^\frac 14$, yielding
\begin{align*}
S_W'(x,q,a)\geq &\sum_{\substack{n\leq x \\ n\equiv a\pmod q}}\Lambda\left(n\right)+O\left(x^\frac 14\right).
\end{align*}

\end{proof}

\section{Main Theorems: Theorem \ref{Main1}}
Finally, we can prove the first main theorem:
\begin{theorem}
Let $\sqrt x<q<D^{-1}x^{\frac 23-3\alpha}$ for any $\alpha>0$, let $(a,q)=1$, and let $A>0$.
Then
$$\psi(x,q,a)\leq \frac{1-\chi_D\left(\frac{aD}{(D,q)}\right)}{\phi(q)}\psi(x)+O(E(x)),$$
where
$$E(x)=\frac xqe^{-\frac{\varepsilon\log x}{24\log R}
}+\frac{x\log^5 xL(1,\chi)}{\phi(q)}+\frac{R\log^2 x}{\phi(q)}.$$
Moreover, if $q$ satisfies the bounds given above and $h=h(x)$ is a function with $h<1$, then the equation
$$\psi(x,q,a)\geq \frac{1-h+\chi_D\left(\frac{aD}{(D,q)}\right)}{\phi(q)}\psi(x),$$
holds for all but $$O\left(\frac{\phi(q)^2E(x)}{hx}\right)$$values of $a$ with $(a,q)=1$.
\end{theorem}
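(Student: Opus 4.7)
The plan is to derive the upper bound by chaining Lemmas~\ref{psibd}, \ref{DW}, and~\ref{SWx}, and to extract the lower bound for almost all $a$ via a Markov-type averaging argument that compares the individual upper bounds against the total mass $\sum_{(a,q)=1}\psi(x,q,a)$.

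For the upper bound, I apply the three lemmas in sequence. Lemma~\ref{psibd} gives $\psi(x,q,a)\leq S_W'(x,q,a)+O(R\log^2 x)$. Lemma~\ref{DW} replaces $S_W'(x,q,a)$ by $\frac{1-\chi_D(aD/(D,q))}{\phi(q)}S_W'(x,q)$ at the cost of an error $O\bigl((Dq)^{1/2+\alpha}+\tfrac{x}{q}e^{-\varepsilon\log x/(24\log R)}+\tfrac{x\log^5 x\,L(1,\chi)}{\phi(q)}\bigr)$, in which $(Dq)^{1/2+\alpha}$ is swallowed by the exponential term thanks to $q<D^{-1}x^{2/3-3\alpha}$. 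Finally, Lemma~\ref{SWx}, combined with the standard estimate $\psi(x)=x+O(xL(1,\chi)\log x+xe^{-c\sqrt{\log x}})$ (valid also in the presence of an exceptional zero), replaces $S_W'(x,q)$ by $\psi(x)$ with errors absorbed into the $\tfrac{x\log^5 x\,L(1,\chi)}{\phi(q)}$ and $R\log^2 x$ summands of $E(x)$. Since $|1-\chi_D(\cdot)|/\phi(q)\leq 2/\phi(q)$, the multiplied-through errors stay within $E(x)$.

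For the lower bound, the starting point is $\sum_{(a,q)=1}\psi(x,q,a)=\psi(x)+O(\log^2 x)$, since the left side omits only prime powers supported at primes dividing $q$. The character sum $\frac{1}{\phi(q)}\sum_{(a,q)=1}\chi_D\bigl(\tfrac{aD}{(D,q)}\bigr)$ vanishes: either $D/(D,q)>1$, forcing $\chi_D(D/(D,q))=0$ and every summand to vanish, or else $D\mid q$, in which case $\chi_D(aD/(D,q))=\chi_D(a)$ lifts to a non-principal character modulo $q$ and orthogonality applies. Define the nonnegative defect $\delta(a):=\frac{1-\chi_D(aD/(D,q))}{\phi(q)}\psi(x)+C\,E(x)-\psi(x,q,a)$, where $C$ is the implied constant in the upper bound; summing and using the vanishing character sum gives $\sum_{(a,q)=1}\delta(a)\ll \phi(q)\,E(x)$. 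Any $a$ violating the claimed lower bound satisfies $\delta(a)>h\psi(x)/\phi(q)$, so Markov's inequality caps the number of exceptional $a$ by $O\bigl(\phi(q)^2 E(x)/(h\psi(x))\bigr)=O\bigl(\phi(q)^2 E(x)/(hx)\bigr)$, as required.

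The main obstacle is bookkeeping rather than any new substantive input: one must verify that every error term produced by the three lemmas fits inside the single $E(x)$ across the full range $\sqrt x<q<D^{-1}x^{2/3-3\alpha}$. The delicate check is that $(Dq)^{1/2+\alpha}\ll (x/q)\,e^{-\varepsilon\log x/(24\log R)}$ and that the PNT-type replacement of $\psi(x)$ by $x$ introduces no new dominant error; this reduces to a short calculation using $\log D=o(\log x)$ and $R=\max\{D^5,xe^{-\sqrt{\log x}}\}$.
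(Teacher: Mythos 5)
Your proof follows the paper's route exactly: chain Lemmas~\ref{psibd}, \ref{DW}, and~\ref{SWx} to get the pointwise upper bound, then compare those pointwise bounds against $\sum_{(a,q)=1}\psi(x,q,a)=\psi(x)$ together with the vanishing of $\sum_{(a,q)=1}\chi_D\bigl(\tfrac{aD}{(D,q)}\bigr)$ to control the exceptional set. Your Markov-inequality framing of the second step is a cosmetic rephrasing of the paper's direct count of $|\mathcal C|$, and your explicit check of the character-sum orthogonality (split on whether $D\mid q$) is a small gap-fill rather than a different approach.
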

\begin{proof}
From Lemma \ref{DW},
\begin{align*}
\psi(x,q,a)\leq S_W'(x,q,a)+O\left(x^\frac 14\right).
\end{align*}

From Lemmas \ref{DR}, \ref{SWx}, and \ref{psibd},
\begin{align*}
S_W'(x,q,a)=&S_R'(x,q,a)+O\left(q^{\frac 12+\alpha}+\frac{x\log x}{qR^{1-2\alpha}}\right)\\
=&\frac{1-\chi_D\left(\frac{aD}{(D,q)}\right)}{\phi(q)}S_R'(x,q)+O\left((Dq)^{\frac 12+\alpha}+\frac xqe^{-\frac{\alpha\log x}{24\log R}}+\frac{x\log^5 xL(1,\chi)}{\phi(q)}\right)\\
=&\frac{1-\chi_D\left(\frac{aD}{(D,q)}\right)}{\phi(q)}S_W'(x,q)+O\left(\frac xqe^{-\frac{\alpha\log x}{24\log R}}+\frac{x\log^5 xL(1,\chi)}{\phi(q)}\right)\\
=&\frac{x\left(1-\chi_D\left(\frac{aD}{(D,q)}\right)\right)}{\phi(q)}+O\left(\frac xqe^{-\frac{\alpha\log x}{24\log R}}+\frac{x\log^5 xL(1,\chi)}{\phi(q)}+\frac{R\log^2 x}{\phi(q)}\right),
\end{align*}
where we have absorbed smaller terms into larger ones.  Letting $E(x)$ be the error term then gives us
\begin{gather}\label{psibd1} \psi(x,q,a)\leq \frac{x\left(1-\chi_D\left(\frac{aD}{(D,q)}\right)\right)}{\phi(q)}+E(x),\end{gather}
proving the first half of the theorem.

For the second half of the theorem, we have trivially
\begin{gather}\label{PNT}
\sum_{a\in \mathbb Z_q^*}\psi(x,q,a)=\psi(x)=x+O\left(xe^{-(\log x)^{\frac 35-\varepsilon}}\right),
\end{gather}
and
$$\sum_{a\in \mathbb Z_q^*}\frac{\left(1-\chi_D\left(\frac{aD}{(D,q)}\right)\right)}{\phi(q)}=1.$$
So let $\mathcal C$ be the set of $a$ such that
$$\psi(x,q,a)\leq \frac{(1-h)x}{\phi(q)}$$
for some $h$.  Bounding the error term in (\ref{PNT}) with $R$,
\begin{align*}x+O\left(R\right)=&\sum_{a\in \mathbb Z_q^*}\psi(x,q,a)\\
\leq &(\phi(q)-|\mathcal C|)\left(\frac{x}{\phi(q)}+O\left(E(x)\right)\right)+|\mathcal C|\frac{(1-h)x}{\phi(q)}\\
=& x-\frac{hx|\mathcal C|}{\phi(q)}+O\left((\phi(q)-|\mathcal C|)E(x)\right).
\end{align*}
Collapsing down the inequalities, we have
$$x+O\left(R\right)\leq x-\frac{hx|\mathcal C|}{\phi(q)}+O\left((\phi(q)-|\mathcal C|)E(x)\right).$$
Since $R\ll E(x)$, this can be rewritten as
$$|\mathcal C|\ll \frac{\phi(q)}{hx}\left(\phi(q)E(x)\right).$$
This completes the theorem.
\end{proof}
Theorem \ref{Main1.5} gives a stronger - and simpler - bound if $D|q$ and $\chi(a)=1$.  We prove this with the following theorem.

\begin{theorem}
Let $\sqrt x<q<D^{-1}x^{\frac 23-3\alpha}$ for any $\alpha>0$, let $(a,q)=1$, let $A>0$, and let $D|q$.  If $\chi(a)=1$ then
$$\psi(x,q,a)\leq \frac{xL(1,\chi)}{q}+O\left((Dq)^{\frac 12+\alpha}\right).$$
\end{theorem}
\begin{proof}
If $p$ is prime and $\chi(p)=1$ then $\lambda(p)=2$ and $\lambda(p^k)=k+1$.  So if $\chi(a)=1$ then
\begin{align*}&\sum_{\substack{n\leq x \\ n\equiv a\pmod q}}2\Lambda(n)\\
&\leq \log x\sum_{\substack{n\leq x \\ n\equiv a\pmod q}}\lambda(n)+\log x\sum_{\substack{p\leq \sqrt x \\ p^2\equiv a\pmod q}}1+\log x\sum_{\substack{p\leq \sqrt x \\ p^3\equiv a\pmod q}}1+O\left(\log x\sum_{k=4}^{\log x}\sum_{\substack{p\leq x^\frac 1k \\ p^k\equiv a\pmod q}}1\right)\\
&=\log x\sum_{\substack{n\leq x \\ n\equiv a\pmod q}}\lambda(n)+O\left(x^\frac 14\log x\right)\\
&=(\log x)S(x,q,a)+O\left(x^\frac 14\log x\right)\\
&=(\log x)S(x,q,a)+O\left(x^\frac 14\log x+(Dq)^{\frac 12+\alpha}\right)
\end{align*}
by Lemma \ref{lambdaap}, where the bound on the error terms from the second to third lines is as in the proof of Lemma \ref{psibd}.  Since
$$S(x,q)=\frac{x\phi(q)L(1,\chi)}{q}+O\left(\sqrt{Dx}\right)$$
by \cite[Lemma 5.1]{FI03}, and since $$x^\frac 14\log x\ll (Dq)^{\frac 12+\alpha},$$
the theorem then follows.
\end{proof}

\section{Main Theorems: Theorem \ref{Main2}}\label{final2}

Finally, we prove the following, from which one easily deduces Theorem \ref{Main2}:

\begin{theorem}
Let $\sqrt x<Q<D^{-1}x^{\frac 23-\alpha}$ for any $\alpha>0$.  Then for any fixed integer $a\neq 0$,
$$\sum_{\substack{q\sim Q \\ (a,q)=1}}\left|\psi(x,q,a)-\frac{\psi(x)}{\phi(q)}\right|\ll xe^{-\frac{\alpha\log x}{24\log R}
}+x\log^5 xL(1,\chi)+R\log^2 x.$$
\end{theorem}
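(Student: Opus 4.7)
The plan is to combine the pointwise equidistribution of $\lambda'_W$ developed in Sections \ref{background1}--\ref{final} with the averaging inequality (\ref{fg}) and Munshi's divisor bound Lemma \ref{Mun}, in order to transfer the sum over $q \sim Q$ to a sum over much smaller moduli $d \leq x^{1/4}$, where the pointwise equidistribution of $\psi$ and $S'_W$ can be evaluated and compared directly.

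First I would decompose, for each $q$ with $(a,q)=1$,
$$\psi(x,q,a)-\frac{\psi(x)}{\phi(q)} = A_q+B_q+C_q,$$
where
\begin{align*}
A_q &= \psi(x,q,a)-S'_W(x,q,a),\\
B_q &= S'_W(x,q,a)-\frac{1-\chi_D(aD/(D,q))}{\phi(q)}\psi(x),\\
C_q &= -\frac{\chi_D(aD/(D,q))}{\phi(q)}\psi(x),
\end{align*}
and bound $\sum_{q\sim Q}$ of the absolute values via the triangle inequality. The factor $\chi_D(aD/(D,q))$ vanishes unless $D\mid q$ (since $D/(D,q)\mid D$ is coprime to $D$ only when $D/(D,q)=1$), so $\sum_q|C_q|$ contributes at most $\ll \psi(x)/\phi(D)$, which is absorbed into the main error since the Siegel hypothesis forces $D$ to be super-polylogarithmic whenever $L(1,\chi)$ is small. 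Lemmas \ref{DW} and \ref{SWx} control $|B_q|$ pointwise, supplying the three summands $(Dq)^{1/2+\alpha}$, $(x/q)e^{-\alpha\log x/24\log R}$, and $xL(1,\chi)\log^5 x/\phi(q)$, which sum over $q\sim Q$ to exactly the target bound.

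The main work is $\sum_q|A_q|$. By Lemma \ref{psibd}, $A_q \leq R\log^2 x$, so $|A_q|\leq(S'_W-\psi)_+(x,q,a)+R\log^2 x$. Since the scalar positive part is bounded by the pointwise positive part,
$$(S'_W-\psi)_+(x,q,a)\leq\sum_{\substack{n\leq x\\ n\equiv a\pmod q}}\bigl(\lambda'_W(n)-\Lambda(n)\bigr)_+,$$
and $(\lambda'_W-\Lambda)_+(n)=\lambda'_W(n)\mathbf{1}_{n\text{ not a prime power}}\geq 0$ is nonnegative. Summing over $q\sim Q$, applying (\ref{fg}) with $f=(\lambda'_W-\Lambda)_+$, and then Lemma \ref{Mun} with $r=4$ (so $\beta<1$), yields
$$\sum_{q\sim Q}(S'_W-\psi)_+(x,q,a) \ll \sum_{d\leq x^{1/4}}\tau(d)^\beta\sum_{\substack{n\leq x\\ n\equiv a\pmod d}}\bigl(\lambda'_W(n)-\Lambda(n)\bigr)_+.$$

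For the inner sum, the prime-power contribution is $\ll(R+\sqrt x\log x)/\phi(d)$ (using PNT for APs on the primes up to $R$), and the rest equals $S'_W(x,d,a)-\psi(x,d,a)$. Since $d\leq x^{1/4}\ll\sqrt x$, an adaptation of Lemma \ref{DW} to the small-modulus regime — building on Lemma \ref{lambda'ap}, which is already valid whenever $Dd<x^{2/3-3\alpha}$ — combined with the Friedlander--Iwaniec evaluation of $\psi(x,d,a)$ given by Theorem \ref{FIFI}, gives both quantities the same main term $(1-\chi_D(aD/(D,d)))\psi(x)/\phi(d)$ with error $\ll xL(1,\chi)\log^{O(1)} x/\phi(d)+(Dd)^{1/2+\alpha}$. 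Since $\beta<1$, the sum $\sum_{d\leq x^{1/4}}\tau(d)^\beta/\phi(d)$ is bounded by a power of $\log x$ (via a Rankin-type estimate), and the total contribution becomes $\ll xL(1,\chi)\log^{5+O(1)} x+R\log^2 x$, matching the theorem. The principal obstacle is the inner-sum estimate for $d \leq x^{1/4}$: Lemma \ref{DW} as stated requires $d\geq\sqrt x$, and the classical Friedlander--Iwaniec bound Theorem \ref{FIFI} carries a $\log^{r^r}$-factor that would spoil the $\log^5 x$ in the target; one must therefore re-run the arguments of Sections \ref{lambdaR}--\ref{lambdaW} in the small-modulus range by hand, exploiting that Lemma \ref{lambda'ap} imposes no lower bound on $d$.
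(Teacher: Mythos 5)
Your overall strategy---reduce to $S'_W$, then sum over $q\sim Q$ by converting the congruence condition into a divisor $\tau(n-a)$ and applying Munshi's Lemma~\ref{Mun} to pass to moduli $d\leq x^{1/r}$---is the same skeleton the paper uses, and the outer decomposition into $A_q$, $B_q$, $C_q$ is fine, as is the treatment of $C_q$ (observing that $\chi_D(aD/(D,q))=0$ unless $D\mid q$ and invoking Landau to absorb $x/D$).

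The genuine gap is in your handling of $\sum_q|A_q|$. Having applied (\ref{fg}) and Lemma~\ref{Mun} to $(\lambda'_W-\Lambda)_+$, you are left needing to estimate $\sum_{n\equiv a\ (d)}(\lambda'_W-\Lambda)_+(n)$, which you correctly identify as essentially $S'_W(x,d,a)-\psi(x,d,a)$ plus a small prime-power correction. To evaluate this you would need both a small-modulus analogue of Lemma~\ref{DW} \emph{and} an asymptotic for $\psi(x,d,a)$. The second requirement is fatal as things stand: the only available evaluation of $\psi(x,d,a)$ under a Siegel hypothesis is Theorem~\ref{FIFI}, whose error term $L(1,\chi)(\log x)^{r^r}$ destroys the $\log^5 x$ you are aiming for, and nothing in Sections~\ref{lambdaR}--\ref{lambdaW} can be re-run to produce a \emph{pointwise} asymptotic for $\psi(x,d,a)$ (those sections yield only an upper bound for $\psi$ via $\lambda'_W$, which is precisely what you are trying to control). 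You flag the obstacle yourself, but ``re-run the arguments by hand'' does not close it.

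The paper sidesteps this entirely by applying the divisor/Munshi reduction not to the full difference $S'_W-\psi$ but only to the \emph{cross term} of the decomposition (\ref{DWLambda}). Writing $n=n_1n_{-1}$ with $n_1$ carrying the primes with $\chi(p)=1$ and $n_{-1}$ those with $\chi(p)=-1$, one has $\lambda'_W(n)-\Lambda(n)$ equal to $\lambda'_W(n_1)$ when $n_{-1}=1$, and equal to $\lambda_W(n_1)\Lambda(n_{-1})$ otherwise (the $n_1=1$ piece is exactly $\Lambda$). The $n_{-1}=1$ piece is dominated by $\lambda(n_1)\log x$, so its sum over $q\sim Q$ is handled directly by Lemma~\ref{lambdaap} applied for each $q$, never touching small moduli at all. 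Only the genuine cross term $\lambda_W(n_1)\Lambda(n_{-1})$ with $n_1,n_{-1}>R$ goes through (\ref{fg}) and Lemma~\ref{Mun} (with $d\leq x^{1/3-\alpha}$, not $x^{1/4}$), and once the modulus is $d\leq x^{1/3-\alpha}$ the inner sum is controlled \emph{elementarily}: either $n_{-1}\geq x^{1/3}>d^{1+\alpha}$, in which case counting $n_{-1}$ in a residue class mod $d$ gives a $1/d$ saving, or $n_1\geq x^{2/3}$, in which case $\sum_{n_1\equiv *\ (d)}\lambda(n_1)$ is bounded by $\frac{1}{\phi(d)}\sum\lambda(n_1)$ via Shiu/Lemma~\ref{lambdaap}. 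No evaluation of $\psi(x,d,a)$ is ever needed, which is the whole point: if you needed a strong asymptotic for $\psi$ modulo $d$ as an input, the argument would be circular. To repair your proposal you should replace the single application of (\ref{fg})--Munshi to $(\lambda'_W-\Lambda)_+$ by the finer split (\ref{DWLambda}), apply Lemma~\ref{lambdaap} directly over $q\sim Q$ to the $n_{-1}=1$ piece, and reserve the divisor reduction for the cross term alone.
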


\begin{proof}
We again recall the decomposition in (\ref{DWLambda}):
\begin{gather}
S_W'(x,q,a)=\sum_{\substack{R<n_1\leq x \\ n_1\equiv a\pmod q}}\lambda'_W\left(n_1\right)+\sum_{\substack{R<p_{-1}\leq x \\ p_{-1}\equiv a\pmod q}}\log\left(p_{-1}\right)+\mathop{\sum\sum}\limits_{\substack{n_1n_{-1}\leq x \\ n_1,n_{-1}>R \\ n_1n_{-1}\equiv a\pmod q}}\lambda_W\left(n_1\right) \lambda_W'\left(n_{-1}\right).
\end{gather}
We show first that when summed over $q\sim Q$, $S_W'(x,q,a)$ will generally be a good approximation for $\psi(x,q,a)$.  To this end,
\begin{align*}
\sum_{\substack{q\sim Q  \\ (a,q)=1}}&\left|S_W'(x,q,a)-\psi(x,q,a)\right|\\
\leq &\sum_{\substack{q\sim Q  \\ (a,q)=1}}\left(\sum_{\substack{R<n_1\leq x \\ n_1\equiv a\pmod q}}\left(\lambda'_W\left(n_1\right)-\Lambda(n_1)\right)+\mathop{\sum\sum}\limits_{\substack{n_1n_{-1}\leq x \\ n_1,n_{-1}>R \\ n_1n_{-1}\equiv a\pmod q}}\lambda_W\left(n_1\right) \lambda_W'\left(n_{-1}\right)+O\left(x^\frac 14\right)\right)
\end{align*}
Recall that $\lambda'_W\left(n_1\right)-\Lambda(n_1)\geq \lambda(n_1)\log x$.  So by Lemma \ref{lambdaap},
\begin{align*}
\sum_{\substack{q\sim Q  \\ (a,q)=1}}\sum_{\substack{R<n_1\leq x \\ n_1\equiv a\pmod q}}\left(\lambda'_W\left(n_1\right)-\Lambda(n_1)\right)\ll &\log x\sum_{\substack{q\sim Q  \\ (a,q)=1}}\sum_{\substack{R<n_1\leq x \\ n_1\equiv a\pmod q}}\lambda\left(n_1\right)\\
\ll &\log x\sum_{\substack{q\sim Q  \\ (a,q)=1}}\frac{1}{\phi(q)}\sum_{\substack{R<n_1\leq x }}\lambda\left(n_1\right)\\
\ll &xL(1,\chi)\log x.
\end{align*}
Hence
\begin{align*}
\sum_{\substack{q\sim Q  \\ (a,q)=1}}&\left|S_W'(x,q,a)-\psi(x,q,a)\right|\ll &\log x\sum_{\substack{q\sim Q  \\ (a,q)=1}}\left(\mathop{\sum\sum}\limits_{\substack{n_1n_{-1}\leq x \\ n_1,n_{-1}>R \\ n_1n_{-1}\equiv a\pmod q}}\lambda_W\left(n_1\right) \right)+O\left(xL(1,\chi)\log x+Qx^\frac 14\right).
\end{align*}
Now, if $n_1n_{-1}\equiv a \pmod q$ then $q|n_1n_{-1}-a$, and hence we can write
\begin{align*}
\sum_{\substack{q\sim Q  \\ (a,q)=1}}\mathop{\sum\sum}\limits_{\substack{n_1n_{-1}\leq x \\ n_1,n_{-1}>R \\ n_1n_{-1}\equiv a\pmod q}}\lambda_W\left(n_1\right) \leq \mathop{\sum\sum}\limits_{\substack{n_1n_{-1}\leq x \\ n_1,n_{-1}>R }}\lambda_W\left(n_1\right) \tau(n_1n_{-1}-a).
\end{align*}
By Lemma \ref{Mun}, we can bound this with
\begin{align*}
\ll &\sum_{d\leq x^{\frac 13-\alpha}} \tau(d)\mathop{\sum\sum}\limits_{\substack{n_1n_{-1}\leq x \\ n_1,n_{-1}>R \\ n_1n_{-1}\equiv a\pmod d}}\lambda_W\left(n_1\right) .
\end{align*}
Note that if $n_{-1}\geq x^\frac 14>d^{1+\alpha}$ then
$$\sum_{\substack{x^{\frac 13}\leq n_{-1}\leq \frac x{n_1} \\ n_{-1}\equiv a\overline{n_1}\pmod d}}1\ll \frac 1d\sum_{\substack{n_{-1}\leq \frac x{n_1} }}1,$$
while if $n_{-1}<x^\frac 14$ then $n_{1}\geq x^\frac 23$, and hence
$$\sum_{\substack{x^{\frac 23}\leq n_1\leq \frac x{n_{-1}} \\ n_{1}\equiv a\overline{n_{-1}}\pmod d}}\lambda_W(n_1)\ll \sum_{\substack{x^{\frac 23}\leq n_1\leq \frac x{n_{-1}} \\ n_{1}\equiv a\overline{n_{-1}}\pmod d}}\lambda(n_1)\ll  \frac 1{\phi(d)}\sum_{\substack{n_1\leq \frac x{n_{-1}} }}\lambda(n_1).$$
So
\begin{align*}
\sum_{d\leq x^{\frac 13-\alpha}} &\tau(d)\mathop{\sum\sum}\limits_{\substack{n_1n_{-1}\leq x \\ n_1,n_{-1}>R \\ n_1n_{-1}\equiv a\pmod d}}\lambda_W\left(n_1\right) \\
\ll &\sum_{d\leq x^{\frac 13-\alpha}} \frac{\tau(d)}{\phi(d)}\mathop{\sum\sum}\limits_{\substack{n_1n_{-1}\leq x \\ n_1,n_{-1}>R }}\lambda\left(n_1\right) \\
\ll &x\sum_{d\leq x^{\frac 13-\alpha}} \frac{\tau(d)}{d}\left[L(1,\chi)\mathop{\sum}\limits_{\substack{ R<n_{-1}\leq 2x^\frac 23}}\frac{1}{n_{-1}}+\mathop{\sum}\limits_{\substack{R<n_{-1}\leq 2x^\frac 14}}\frac{\lambda(n_1)}{n_{1}}\right]\\
\ll &xL(1,\chi)\log^{3} x.
\end{align*}
Hence
\begin{align*}
\sum_{\substack{q\sim Q  \\ (a,q)=1}}&\left|S_W'(x,q,a)-\psi(x,q,a)\right|\ll xL(1,\chi)\log^4 x+Qx^\frac 14\ll xL(1,\chi)\log^4 x.
\end{align*}
From Lemma \ref{DW}, if $Q<D^{-1}x^{\frac 23-3\alpha}$ then
$$\sum_{\substack{q\sim Q  \\ (a,q)=1}}\left|S_W'(x,q,a)-\frac{1}{\phi(q)}S_W'(x,q)\right|\ll xe^{-\frac{\alpha\log x}{24\log R}}+x\log^5 xL(1,\chi),$$
and we recall from Lemma \ref{SWx} that
$$S'_W(x,q)=\psi(x)+O\left(R\log^2 x+x\log xL(1,\chi)\right).$$
Choosing the largest terms from these expressions, the theorem then follows.
\end{proof}

\bibliographystyle{line}

\end{document}